\documentclass[12pt]{amsart}
\usepackage{amsfonts, amsmath}

\usepackage{ amsmath, amsthm, amsfonts, amssymb, color}
 \usepackage{mathrsfs}
 \usepackage{amsmath,amstext,amsthm,amssymb,amsxtra}

\textheight=22cm \textwidth = 6.68 true in \marginparsep=0cm
\oddsidemargin=-0cm \evensidemargin=0cm \headheight=13pt
\headsep=0.8cm
\parskip=0pt
\hfuzz=6pt \widowpenalty=10000 \setlength{\topmargin}{-0.8cm}
\baselineskip 16pt \hfuzz=6pt

\theoremstyle{plain}
\newtheorem{theorem}{Theorem}[section]

\newtheorem{proposition}[theorem]{Proposition}
\newtheorem{lemma}[theorem]{Lemma}
\newtheorem{corollary}[theorem]{Corollary}

\theoremstyle{definition}
\newtheorem{definition}[theorem]{Definition}
\newtheorem{remark}[theorem]{Remark}

\allowdisplaybreaks

\def\RR{\mathbb R}
\def\R{\mathbb R}

\def\11{1\!\!1}
\def\supp{\text{supp}}
\def\SL{\sqrt{L}}

\newcommand\D{\mathcal{D}}
\def\DDelta{\overrightarrow{\Delta}}
\def\f12{\frac{1}{2}}
\def\forms{\Lambda^1T^*M}

\DeclareMathOperator{\support}{supp}

\title[Riesz transforms on non-compact manifolds ]
{Riesz transforms on non-compact manifolds}
\author{Peng Chen, Jocelyn Magniez and El Maati Ouhabaz}

\address{Peng Chen,  Institut de Math\'ematiques de Bordeaux,  Univ.  Bordeaux, UMR 5251,
351, Cours de la Lib\'eration 33405 Talence, France}
\email{peng.chen@math.u-bordeaux1.fr}

\address{Jocelyn Magniez,  Institut de Math\'ematiques de Bordeaux,  Univ.  Bordeaux, UMR 5251,
351, Cours de la Lib\'eration 33405 Talence, France}
\email{Jocelyn.Magniez@math.u-bordeaux1.fr}

\address{El Maati Ouhabaz,  Institut de Math\'ematiques de Bordeaux,  Univ.  Bordeaux, UMR CNRS 5251,
351, Cours de la Lib\'eration 33405 Talence, France}
\email{Elmaati.Ouhabaz@math.u-bordeaux1.fr}
\thanks{The research of the  authors was partially supported by the ANR project  HAB, ANR-12-BS01-0013-02.}

\medskip

\subjclass[2000]{  42B20 ∑  42B30 ∑ 58J35 ∑ 47F05}
\keywords{ The Riesz transform, Riemannian manifold, the Hodge-de Rham Laplacian, Hardy spaces, Schr\"odinger operators, the heat kernel}

  \date{\today}

\begin{document}

\begin{abstract}Let $M$ be a complete non-compact Riemannian manifold satisfying the doubling volume property
 as well as a Gaussian upper bound for the  corresponding heat kernel. We study the boundedness of the Riesz transform $d\Delta ^{-\frac{1}{2}}$ on both Hardy spaces $H^p$ and Lebesgue spaces $L^p$  under two different conditions on the negative part of the Ricci curvature $R^-$.
 First we prove that if $R^-$ is $\alpha$-subcritical for some $\alpha \in [0,1)$, then the Riesz transform $d^*\DDelta^{-\frac{1}{2}}$ on differential $1$-forms is bounded from the associated Hardy space  $H^p_{\overrightarrow{\Delta}}(\Lambda^1T^*M)$ to $L^p(M)$ for all $p\in [1,2]$.
 As a consequence,   $d\Delta^{-\frac{1}{2}}$ is bounded on $ L^p$ for all $p\in (1,p_0)$ where $p_0>2$ depends on $\alpha$ and the  constant appearing in the doubling  property. Second, we prove that if
$$\int_0^1 \left\|\frac{|R^-|^{\frac{1}{2}}}{v(\cdot,\ \sqrt{t})^{\frac{1}{p_1}}}\right\|_{p_1}\frac{dt}{\sqrt{t}}+\int_1^\infty \left\|\frac{|R^-|^{\frac{1}{2}}}{v(\cdot,\ \sqrt{t})^{\frac{1}{p_2}}}\right\|_{p_2}\frac{dt}{\sqrt{t}}<\infty,$$
for some $p_1>2$ and $p_2>3$,  then the  Riesz transform $d\Delta^{-\frac{1}{2}}$
is bounded on $L^p$ for all $1<p<p_2$. In the particular case where
$v(x, r) \ge C r^D$  for all $r \ge 1$ and  $|R^-| \in L^{D/2 -\eta} \cap L^{D/2 + \eta}$ for some $\eta > 0$, then $d\Delta^{-\frac{1}{2}}$ is bounded on $L^p$ for all $1<p< D.$

Furthermore, we study the boundedness of the Riesz transform of Schr\"odinger operators $A=\Delta+V$ on $L^p$ for $p>2$ under conditions on $R^-$ and the potential $V$.  We prove both positive and negative results on the boundedness of $dA^{-\frac{1}{2}}$ on $L^p$.  \end{abstract}

\maketitle


\bigskip

   \section{Introduction  }
\setcounter{equation}{0}

Let $(M,g)$ be a complete non-compact Riemannian manifold and let
$\rho$ be the geodesic distance and $\mu$ be the Riemannian measure
associated with the metric $g$. Assume that $M$ satisfies the
doubling volume property, that is, there exists a constant $C>0$
such that
$$v(x,2 r)\leq C v(x,r) {\mbox{ for all }} x\in M,\,
r\geq 0,$$
where $v(x,r)$ denotes the volume of the ball $B(x,r)$ of center $x$
and radius $r$. This property is equivalent to the following one. There exist  constants $C>0$ and $ D>0$ such that
\begin{equation}\label{def:doubling2}
v(x,\lambda r)\le C\lambda^{D}v(x,r) \;{\mbox{ for all }} x\in M,\, r\ge 0, \,\lambda\ge 1.
\end{equation}
In the sequel we will be interested in the  smallest possible $D$ for which  \eqref{def:doubling2} holds. For convenience we call $D$  ``the"  doubling dimension or the homogeneous dimension. 

Let $\Delta$ be the non-negative Laplace-Beltrami operator on $M$
and $p_t(x,y)$  the corresponding heat kernel, i.e.,  the integral
kernel
 of the semigroup $e^{-t\Delta}$. We
assume that $p_t(x,y)$ satisfies a Gaussian upper
bound
\begin{eqnarray}\label{def:Gassianbound}
p_t(x,y)\leq C v(x,\sqrt{t})^{-1}e^{-c\frac{\rho^2(x,y)}{t}} {\mbox{
for all }} t>0, \, x,y\in M,
\end{eqnarray}
where $c, C>0$ are constants.  The validity of \eqref{def:Gassianbound} has been intensively studied in the literature.

We consider the  Riesz transform $d\Delta^{-\f12}$. Integration by
parts shows that  $d\Delta^{-\f12}$ is bounded  from $L^2(M)$ to
$L^2(\forms)$, where $\forms$ denotes the space of differential
$1$-forms. We address the problem whether the Riesz transform
$d\Delta^{-\f12}$ could be extended to a  bounded operator from
$L^p(M)$ to $L^p(\forms)$ for $p\neq 2$.

Under the assumptions \eqref{def:doubling2} and
\eqref{def:Gassianbound}, it was proved by Coulhon and Duong
\cite{CD} that $d\Delta^{-\f12}$ is bounded from $L^p(M)$ to
$L^p(\forms)$ for all $p\in (1,2]$.  They also gave a counter-example
which shows that   \eqref{def:doubling2} and
\eqref{def:Gassianbound} are not sufficient  in the case $p > 2$. So additional assumptions are needed.
 Many works have been devoted to this problem.

Under Li-Yau estimates, or equivalently under the doubling condition and a $L^2$ Poincar\'e inequality, Auscher and Coulhon \cite{AC} proved that there exists $\epsilon>0$ such that $d\Delta^{-\f12}$ is bounded on $L^p$ for all $2\le p<2+\epsilon$. In the same setting, Auscher, Coulhon, Duong and Hofmann \cite{acdh} found an equivalence between the boundedness of the Riesz transform on $L^p$ for $p>2$ and the gradient  estimate $\|de^{-t\Delta}\|_{p-p}\le C/\sqrt{t}$ for the corresponding semigroup on $L^p$. 

Bakry \cite{B} proved that if the manifold has a non-negative Ricci curvature, then $d\Delta^{-\f12}$ is bounded on $L^p$ for all $p\in(1,\infty)$.

Another idea to treat the case $p>2$ is by duality.  By  the
commutation formula $\DDelta d=d\Delta$, we are  then interested in
the Riesz transform $d^*\DDelta^{-\f12}$  where $\DDelta = d d^* +
d^* d$ is the Hodge-de Rham Laplacian  on differential $1$-forms.
The boundedness of $d^*\DDelta^{-\f12}$ on $L^q$ for some $q \in
(1,2)$ implies the boundedness of  $d\Delta^{-\f12}$ on $L^p$ where
$\frac{1}{p} + \frac{1}{q} = 1$. This strategy  was used in  Coulhon
and Duong \cite{CD2} by looking at the heat kernel on differential
forms.  They also made an interesting connection  between
boundedness of the Riesz transform and  Littlewood-Paley-Stein
inequalities. 

Let us recall the B\"ochner formula
  $$\DDelta=\nabla^*\nabla+R^+-R^-$$
where $R^+$ and $R^-$ are respectively the positive and negative
part of the Ricci curvature and $\nabla$ denotes the Levi-Civita
connection on $M$. We use this  formula to look at  $\DDelta$ as a Schr\"odinger operator  on $1$-forms and then to  bring known  techniques
for the Riesz transforms of Schr\"odinger operators on functions and try to adapt them to this setting. We note however that  the boundedness of the Riesz transform of a Schr\"odinger operator is a delicate task even in the Euclidean setting. See  Assaad and Ouhabaz \cite{AO} and also the last sections of the present paper.

First, we assume that the negative part $R^-$ is $\alpha$-subcritical for some  $\alpha\in [0,1)$, that is
\begin{eqnarray}\label{eq:conditionR-}
0\leq (R^-\omega,\omega)\leq \alpha(H\omega,\omega) := \alpha( (\nabla^*\nabla +R^+)\omega, \omega) {\mbox{ for
all }} \omega\in C_c^\infty(\forms),
\end{eqnarray}
where $(\cdot,\cdot)$ is the inner product in $L^2(\Lambda^1T^*M)$. Then we prove that $d^* \DDelta^{-\f12}$ is
bounded from $H_{\DDelta}^1(\forms)$ to $L^1(M)$
where $H_{\DDelta}^1(\forms)$ is the Hardy spaces associated with the operator $\DDelta$,  see Section~\ref{sec:Hardy spaces} for details
and definitions.
By interpolation,  we obtain that   $d^* \DDelta^{-\f12}$ is bounded from $L^p(\forms)$ to $L^p(M)$ for all $p\in(1,2]$ if
$D\leq 2$ and all $p\in (p_0',2]$ if $D>2$ where
$p_0:=\frac{2D}{(D-2)(1-\sqrt{1-\alpha})}$. The latter result was  proved recently by Magniez  \cite{M}.  As a corollary, the Riesz transform $d \Delta^{-\f12}$ is bounded from $L^p(M)$  to $L^p(\forms)$ for all $p\in
(1,\infty)$ if $D\leq 2$ and all $p\in (1,p_0)$ if $D>2$.

 The above $L^p$-boundedness result is not sharp in general. Note that if  $\alpha$ is close to $1$ and  $D$ is large, then $p_0$ is close to $2$.
In \cite{D}, Devyver  proved that if $M$ satisfies a Sobolev inequality together with the additional assumption that  balls of large  radius have a polynomial volume growth, that is $cr^D\leq v(x,r)\leq Cr^D$ for $r\ge 1$,  and $R^-\in L^{\frac{D}{2}-\varepsilon}\cap L^{\infty}$ for some $\varepsilon>0$, then the Riesz transform $d \Delta^{-\f12}$ is bounded from $L^p(M)$  to $L^p(\forms)$ for all $p\in
(1,D)$. 

Our aim is to get a similar result under the doubling condition for the volume without assuming the  Sobolev inequality.
We suppose that the negative part of the Ricci curvature $R^-$ satisfies
\begin{eqnarray}\label{eq:vol}
\|R^-\|_{vol}:=\int_0^1 \left\|\frac{|R^-|^{\frac{1}{2}}}{v(\cdot,\ \sqrt{t})^{\frac{1}{p_1}}}\right\|_{p_1}\frac{dt}{\sqrt{t}}+\int_1^\infty \left\|\frac{|R^-|^{\frac{1}{2}}}{v(\cdot,\ \sqrt{t})^{\frac{1}{p_2}}}\right\|_{p_2}\frac{dt}{\sqrt{t}}<\infty,
\end{eqnarray}
for some $p_1, p_2>2$. We prove that if $3<p_2\le D$ then the  Riesz transform $d\Delta^{-\frac{1}{2}}$ is bounded on $L^p$ for all $1<p<p_2$. In the particular case where the volume of balls has polynomial lower bound, $v(x, r) \ge Cr^D$, then our condition $\|R^-\|_{vol}<\infty$
is satisfied if  $R^-\in L^{\frac{D}{2}-\eta}\cap L^{\frac{D}{2}+\eta} $ for some $\eta>0$. In this situation, $p_2$ can be chosen to be  any number smaller than $D$ and  our result  shows that $d\Delta^{-\frac{1}{2}}$ is bounded on $L^p$ for all $1<p< D$.  This latter result recovers and extends a result due to  Devyver \cite{D} who assumes $R^- \in L^{\frac{D}{2}-\eta}\cap L^{\infty}$. Let us also mention recent results by Carron
\cite{C2} who proved in particular that if the negative part of the Ricci curvature has at most a  quadratic growth and the volume satisfies a reverse doubling condition with a "dimension" $\nu$, then $d\Delta^{-\frac{1}{2}}$ is bounded on $L^p$ for $p \in (1, \nu)$.

In the last two sections of the paper  we consider the Riesz transform of Schr\"odinger operators. Let $A=\Delta+V$ with signed potential $V=V^+-V^-$. Similarly to our first result, we assume that  $V^+\in L_{loc}^1$ and $V^-$ satisfies  $\alpha$-subcritical condition for
some $\alpha\in [0,1)$ :
\begin{eqnarray}\label{eq:conditionV-}
\int_M V^-u^2d\mu\leq \alpha\Big[\int_M |\nabla
u|^2d\mu+\int_MV^+u^2d\mu\Big] \, {\rm for \ all }\  u\in W^{1,2}(M).
\end{eqnarray}

Under this assumption on the potential $V$, we prove that the associated Riesz
transform $d A^{-\frac{1}{2}}$ is bounded from $H_A^1(M)$ to $L^1(\forms)$. By interpolation we obtain that  $d A^{-\frac{1}{2}}$
 is bounded on $L^p(M)$ for all $p\in(1,2]$ if $D\leq 2$ and all
$p\in (p_0',2]$ if $D>2$, where again $p_0:=\frac{2D}{(D-2)(1-\sqrt{1-\alpha})}$.  The latter result is proved by  Assaad and Ouhabaz \cite{AO} by a different approach.
For $p>2$, we assume in addition that the negative part of the Ricci curvature $R^-$ satisfies~\eqref{eq:vol} and  also $V$ satisfies~\eqref{eq:vol}
that is
\begin{equation}\label{Vvol}
\int_0^1 \left\|\frac{|V|^{\frac{1}{2}}}{v(\cdot,\ \sqrt{t})^{\frac{1}{p_1}}}\right\|_{p_1}\frac{dt}{\sqrt{t}}+\int_1^\infty \left\|\frac{|V|^{\frac{1}{2}}}{v(\cdot,\ \sqrt{t})^{\frac{1}{p_2}}}\right\|_{p_2}\frac{dt}{\sqrt{t}}<\infty.
\end{equation}
 Then we  prove that $dA^{-\frac{1}{2}}$ is bounded on $L^p$ for all $p_0'<p<\frac{p_0 r}{p_0+r}$ where $r=\inf(p_1,p_2)$.
In the particular case where  the volume $v(x,r)$ has polynomial growth and $V^-=0$, our result implies that $dA^{-\frac{1}{2}}$ is bounded on $L^p$ for all $1<p<D$ provided  $V \in L^{\frac{D}{2} - \eta} \cap L^{\frac{D}{2} + \eta}$  for some $\eta > 0$. In the last section  we prove that the interval $(1, D)$ cannot be improved in general.  More precisely, we assume that the manifold $M$ satisfies the Poincar\'e inequality, the doubling condition~\eqref{def:doubling2} with the doubling constant $D$ and that there exists a positive bounded function $\phi$ such that  $A\phi=0$. We then prove that if the Riesz transform $d A^{-\frac{1}{2}}$ is bounded on $L^p$ for some $p>D$, then $V=0$. A similar result was proved by Guillarmou and Hassell \cite{GH} on complete noncompact and asymptotically conic Riemannian manifold assuming $V$ is smooth and sufficiently vanishing at infinity. In particular this is satisfied on the Euclidean space $\mathbb{R}^n$ with a smooth and compactly supported potential $V$.

\bigskip

Throughout, the symbols ``$c$" and ``$C$" will denote (possibly
different) constants that are independent of the essential
variables.

\medskip
\section{Hardy spaces associated with self-adjoint operators}
\label{sec:Hardy spaces} \setcounter{equation}{0}

In this section, we recall Hardy
spaces $H_L^p$ associated with a given  operator $L$ on manifolds. The operator $L$ is either acting on functions or  on
differential $1$-forms. These Hardy spaces have been  studied by several authors,  see for
example \cite{Au, AMR, AMM, DY2, HLMMY, HM, HMMc}.

Let $(X, \rho, \mu)$  be a metric measured space   satisfying the
doubling condition~\eqref{def:doubling2}. We denote by  $TX$ a
smooth vector bundle over  $X$ with scalar product
$(\cdot,\cdot)_x$. For $f(x)\in T_xX$ we put
$|f(x)|_x^2=(f(x),f(x))_x$. To simplify the notation we will write
$|\cdot|$ instead of $|\cdot|_x$.
 In the next sections of this paper, $X$ will be a
complete non-compact Riemannian manifold.

First, we recall the definitions of the finite speed propagation property and the Davies-Gaffney estimates for semigroup.
For $r>0$, we set
\begin{equation*}
\D_r:=\{ (x,\, y)\in X\times X: {\rho}(x,\, y) \le r \}.
\end{equation*}
Given an operator $T$ on $L^2(TX)$, we write
\begin{equation}\label{eq:include}
\supp \, K_{T} \subseteq \D_r
\end{equation}
if $\langle T f_1, f_2 \rangle = 0$ for all  $f_k \in C(TX)$ with
$\supp \, f_k \subseteq B(x_k,r_k)$ for $k = 1,2$ and
$r_1+r_2+r < {\rho}(x_1, x_2)$.
 If $T$ is an integral operator
with kernel  $K_T$,  then \eqref{eq:include} has  the usual meaning $K_T(x,y) = 0$ for a.e. $(x,y) \in X \times X$ with $\rho(x,y) > r$.

\begin{definition}\label{def:finitspeed}
 Given a  non-negative self-adjoint operator $L$  on $L^2(TX)$.
One says that the operator $L$ satisfies the {\emph {finite speed
propagation property}} if
$$
{\rm supp} \, K_{\cos(t\SL)} \subseteq \D_t  \quad {\rm for \,
all\,}\, t\ge 0\,. \leqno{\rm (FS)}
$$
\end{definition}

\begin{definition}\label{def:DavisGaffney}
One says that the semigroup $\{e^{-tL}\}_{t>0}$ generated by (minus) $L$
satisfies the {\it Davies-Gaffney estimates} if there exist constants
$C$, $c>0$ such that for all open subsets $U_1,\,U_2\subset X$ and
all $t>0$,
\begin{equation}\label{eq:DGestimates}
|\langle e^{-tL}f_1, f_2\rangle| \leq C\exp\Big(-{{\rm
dist}(U_1,U_2)^2\over c\,t}\Big) \|f_1\|_{L^2(TX)}\|f_2\|_{L^2(TX)},
\nonumber \leqno{\rm (DG)}
\end{equation}
for every $f_i\in L^2(TX)$ with $\mbox{supp}\,f_i\subset U_i$,
$i=1,2$, where ${\rm dist}(U_1,U_2):=\inf_{x\in U_1, y\in U_2}
\rho(x,y)$.
\end{definition}

The following result is taken from  \cite[Theorem~2]{S}.

 \begin{proposition}\label{prop2.1} Let $L$ be a non-negative self-adjoint operator
acting on $L^2(TX)$. Then the finite speed propagation property {\rm
(FS)} and Davies-Gaffney estimates {\rm (DG)} are equivalent.
\end{proposition}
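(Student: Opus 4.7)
The plan is to prove the two implications separately: (FS) $\Rightarrow$ (DG) is essentially immediate from spectral calculus, whereas (DG) $\Rightarrow$ (FS) is the substantive direction and relies on an extension of the Davies--Gaffney bound to complex time.

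For (FS) $\Rightarrow$ (DG), I would use the subordination identity
$$e^{-tL} = \frac{1}{\sqrt{\pi t}}\int_0^\infty e^{-s^2/(4t)}\cos(s\SL)\,ds,$$
which follows from the scalar formula $e^{-t\lambda^2} = \frac{1}{\sqrt{\pi t}}\int_0^\infty e^{-s^2/(4t)}\cos(s\lambda)\,ds$ applied via the spectral theorem with $\lambda = \SL$. If $f_i\in L^2(TX)$ are supported in $U_i$ with $d:=\mathrm{dist}(U_1,U_2)>0$, then (FS) forces $\langle \cos(s\SL) f_1, f_2\rangle = 0$ for every $s<d$, while the spectral bound $\|\cos(s\SL)\|_{2\to 2}\leq 1$ controls the remaining range. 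A direct estimate then gives
$$|\langle e^{-tL} f_1, f_2\rangle| \leq \frac{\|f_1\|_2\|f_2\|_2}{\sqrt{\pi t}}\int_d^\infty e^{-s^2/(4t)}\,ds \leq C e^{-d^2/(ct)}\|f_1\|_2\|f_2\|_2,$$
which is (DG).

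For the converse, my first step would be to promote (DG) to an estimate at complex times. Since $L$ is non-negative self-adjoint, $\{e^{-zL}\}$ is a holomorphic contraction semigroup on the right half-plane. Fixing $f_1, f_2 \in L^2(TX)$ with disjoint supports at distance $d$, the function $F(z) := \langle e^{-zL} f_1, f_2 \rangle$ is holomorphic there, satisfies $|F(t)|\le C e^{-d^2/(ct)}\|f_1\|_2\|f_2\|_2$ on the positive real axis (by (DG)), and $|F(iy)|\le \|f_1\|_2\|f_2\|_2$ on the imaginary axis (by contractivity). A Phragm\'en--Lindel\"of argument applied to the auxiliary function $z \mapsto e^{d^2/(cz)} F(z)$, whose modulus is bounded on the two boundary rays of the right half-plane, then yields
$$|\langle e^{-zL} f_1, f_2\rangle| \leq C\exp\!\Big(-\tfrac{d^2}{c}\,\mathrm{Re}\tfrac{1}{z}\Big)\|f_1\|_2\|f_2\|_2,\quad \mathrm{Re}\,z>0.$$
The second step is to recover the wave propagator. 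Writing $\cos(t\lambda)$ as a contour integral of $e^{-z\lambda^2}$ in the complex parameter $z$ (equivalently, inverting the subordination formula used above) and substituting $\lambda = \SL$ via functional calculus, the complex-time Gaussian decay above combines with the Gaussian factor in the contour representation to force $\langle \cos(t\SL) f_1, f_2\rangle = 0$ whenever $|t|<d$. Extending from such pairs of test functions to arbitrary $f_k\in C(TX)$ with $\supp f_k \subseteq B(x_k, r_k)$ and $r_1 + r_2 + t < \rho(x_1,x_2)$ is then routine, giving (FS).

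The main obstacle is the complex-time extension: Phragm\'en--Lindel\"of must be applied carefully, because $e^{d^2/(cz)}$ blows up as $z\to 0$ along any direction transverse to the imaginary axis, so one has to work on truncated sectors and control the growth at infinity and the behaviour near $z=0$ before passing to the limit. Once this analytic extension is secured, the recovery of $\cos(t\SL)$ from the semigroup is a matter of careful contour manipulation. These delicate points are carried out rigorously by Sikora in \cite{S}.
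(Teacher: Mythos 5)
Your sketch is correct and matches the source the paper relies on: the paper does not prove Proposition \ref{prop2.1} but simply cites it as \cite[Theorem~2]{S}, and your outline reproduces the two directions of Sikora's argument faithfully — the elementary subordination step for (FS) $\Rightarrow$ (DG), and the Phragm\'en--Lindel\"of extension to complex time followed by contour recovery of $\cos(t\sqrt{L})$ for the converse, with an honest acknowledgement that the delicate parts of the latter are worked out in \cite{S}.
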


\medskip

Next we recall the definition of Hardy spaces associated with self-adjoint operators.
Assume that the operator $L$ satisfies the Davies-Gaffney estimates~{\rm
(DG)}. Following \cite{AMR, AMM, DY2, HLMMY} one can define the $L^2$ adapted Hardy
space by
\begin{equation}\label{eq2.H2}
H^2(TX) := \overline{R(L)},
\end{equation}
that is, the closure of the range of $L$ in $L^2(TX)$.  Then $L^2(TX)$
is the orthogonal sum of $H^2(TX)$ and the null space $N(L)$.
Consider the following quadratic functional associated to $L$:
\begin{eqnarray}
S_{K}f(x):=\Big(\int_0^{\infty}\!\!\!\!\int_{\substack{
\rho(x,y)<t}} |(t^2L)^{K}e^{-t^2L} f(y)|^2 {d\mu(y)\over
v(x,t)}{dt\over t}\Big)^{\frac{1}{2}}, \label{e2.3}
\end{eqnarray}
 where $x\in X$, $f\in L^2(TX)$ and $K$ is a natural number. For each $K\geq 1$ and $0<
p<\infty$, we now define
\begin{eqnarray*}
D_{K, p} :=\Big\{ f\in H^2(TX): \ S_{K}f\in L^p(X)\Big\}.
\end{eqnarray*}

\begin{definition}{\label{def2.2}} Let $L$ be
 a  non-negative self-adjoint operator on $L^2({TX})$
 satisfying the Davies-Gaffney estimates {\rm (DG)}. \

(i) For each $p\in (0,2]$, the Hardy space $H^p_{L}(TX)$ associated
with $L$  is the completion of the space $D_{1, p}$ with respect to  the norm
$$
 \|f\|_{H_{L}^p(TX)}:=  \|S_{1}f\|_{L^p(X)}.
$$

(ii) For each $p\in (2,\infty)$, the Hardy space $H^p_{L}(TX)$
associated with $L$ is the completion of the space $D_{K_0, p}$ in the
norm
$$
\|f\|_{H_{L}^p(TX)}:=  \|S_{K_0}f\|_{L^p(X)}, \ \ \ {\rm where}\,\,
K_0=\Big[\,{D\over 4}\,\Big]+1.
$$
\end{definition}

It can be verified that the dual of $H^p_{L}(TX)$ is
$H^{p'}_{L}(TX)$, with $\frac{1}{p}+ \frac{1}{p}'=1$ (see Proposition 9.4 of
\cite{HLMMY}). We also have complex interpolation  and
Marcinkiewicz-type interpolation results between $H^p_L(TX)$ for
$1\leq p\leq 2$ (see Proposition 9.5 and Theorem 9.7
\cite{HLMMY}). Although the above results in \cite{HLMMY} are stated and proved on $X$ and not on $TX$, the whole machinery
developed there works  in the context of Hardy spaces over  $TX$.

  Note that if we only assume
Davies-Gaffney estimates on the heat kernel of $L$, for
$1<p<\infty$, $p\not= 2$, $H^p_{L}(TX)$ may or may not coincide with
the space $L^p(TX)$. For the relation of $H_L^p(TX)$ and $L^p(TX)$ for $1<p\leq 2$, we have
the following proposition.

\begin{proposition}\label{prop:HpCoinsidesLp}
    Let  $L$ be an injective, nonnegative self-adjoint operator
    on $L^2(TX)$ satisfying the finite propagation speed
    property ${\rm (FS)}$ and general $(p_0,2)$-Davies-Gaffney estimates
$$
\|\chi_{B(x,t)}e^{-t^2L}\chi_{B(y,t)}\|_{p_0\to 2}\leq
Cv(x,t)^{\frac{1}{2}-\frac{1}{p}}\exp\Big(-c\rho(x,y)^2/t^2\Big)
\leqno{\rm (DG_{p_0})}
$$
    for some $p_0$ with $1\leq p_0 \leq 2$. Then for each $p$ with
    $p_0 < p \leq 2$, the Hardy space $H^{p}_{L}(TX)$ and the Lebesgue space $L^p(TX)$ coincide and their norms are equivalent.
\end{proposition}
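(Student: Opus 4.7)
The plan is to establish the norm equivalence $\|f\|_{H^p_L(TX)} = \|S_1 f\|_{L^p} \simeq \|f\|_{L^p}$ on a dense subspace and then pass to completions. The case $p=2$ is essentially free: injectivity of the self-adjoint operator $L$ gives $\overline{R(L)} = L^2(TX)$, so $H^2_L(TX) = L^2(TX)$, and $\|S_1 f\|_{L^2} \simeq \|f\|_{L^2}$ follows from the spectral theorem via $\int_0^\infty |s^2\lambda\, e^{-s^2\lambda}|^2 \frac{ds}{s} = $ const. So the work is for $p_0 < p < 2$, split into an upper and lower bound.

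Step~1 (upper bound $\|S_1 f\|_{L^p} \lesssim \|f\|_{L^p}$). I would realise $S_1$ as a vector-valued singular integral $f \mapsto Tf$ taking values in $L^2\bigl((0,\infty) \times X;\, v(\cdot,s)^{-1}\,\frac{d\mu\,ds}{s}\bigr)$, via $(Tf)(s,y) = (s^2 L)e^{-s^2 L}f(y)\,\chi_{\{\rho(x,y)<s\}}$. The hypothesis $(\mathrm{DG}_{p_0})$, combined with analytic $H^\infty$-calculus (or spectral theory applied to the multiplier $\lambda \mapsto \lambda e^{-\lambda}$), upgrades to $(p_0,2)$ off-diagonal estimates for the family $\{(s^2 L)e^{-s^2 L}\}_{s>0}$. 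Feeding these into the Blunck--Kunstmann / Auscher extrapolation theorem (as used in HLMMY for the scalar case, and extending verbatim to the vector-bundle setting since only the measure-metric structure of $X$ enters in the Calder\'on--Zygmund decomposition) yields $L^p$-boundedness of $T$, and hence of $S_1$, for every $p \in (p_0, 2]$.

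Step~2 (lower bound $\|f\|_{L^p} \lesssim \|S_1 f\|_{L^p}$). Since $L$ is injective and self-adjoint on $L^2(TX)$, the spectral theorem gives the Calder\'on reproducing formula
\begin{equation*}
f \;=\; c\int_0^\infty (t^2 L)e^{-t^2 L}\bigl[(t^2 L)e^{-t^2 L}f\bigr]\,\frac{dt}{t}
\end{equation*}
in the weak sense on $L^2(TX)$. Testing against $g \in L^{p'}(TX) \cap L^2(TX)$, Fubini, and a pointwise reduction to a tent-space pairing (Coifman--Meyer--Stein, in the doubling version of Russ/HLMMY) produce
\begin{equation*}
|\langle f, g\rangle| \;\lesssim\; \|S_1 f\|_{L^p}\,\|S_1 g\|_{L^{p'}}.
\end{equation*}
Because $L$ is self-adjoint, $(\mathrm{DG}_{p_0})$ dualises to a $(2,p_0')$-Davies--Gaffney estimate, and rerunning Step~1 on the dual side bounds $\|S_1 g\|_{L^{p'}} \lesssim \|g\|_{L^{p'}}$ for every $p' \in [2, p_0')$. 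Taking the supremum over $g$ with $\|g\|_{L^{p'}} \le 1$ yields the claim.

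Step~3 (completions). On the dense subspace $D_{1,p} \cap L^p(TX)$, Steps~1--2 give $\|S_1 f\|_{L^p} \simeq \|f\|_{L^p}$; the abstract completions used to define $H^p_L(TX)$ and $L^p(TX)$ therefore agree with equivalent norms. The principal difficulty is Step~1: checking that the hypotheses of the Blunck--Kunstmann extrapolation are verifiable in the \emph{vector-valued} $L^2$-tent-space target under only $(\mathrm{DG}_{p_0})$ and doubling, without pointwise Gaussian bounds. The finite speed propagation $(\mathrm{FS})$ plays the decisive role here, since it allows one to replace $e^{-t^2 L}$ by a truncation built from $\cos(s\sqrt L)$ with $s \le t$, thereby producing the compactly supported ``local'' piece whose off-diagonal tail decays fast enough to absorb the bad set of a Calder\'on--Zygmund decomposition at level $\lambda \simeq \|f\|_{L^p}/\lambda^p$.
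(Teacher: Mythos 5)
Your proposal is correct and reconstructs precisely the square-function-plus-tent-space-duality argument underlying the references the paper invokes (HMMc, Proposition~9.1(v); Auscher's memoir; Uhl, Theorem~4.19), namely: Blunck--Kunstmann/Auscher extrapolation from the $(p_0,2)$ off-diagonal bounds to get $\|S_1 f\|_p\lesssim\|f\|_p$, and a Calder\'on reproducing formula together with Coifman--Meyer--Stein tent-space duality (with the dualised $(2,p_0')$ estimates) for the converse. Since the paper gives no independent proof but simply defers to exactly this machinery, your route matches the paper's.
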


\begin{proof}
This proof is the same as  for  Hardy space $H^p_L(X)$ (see \cite[Proposition
9.1(v)]{HMMc} and~\cite{Au}). For more  details, see also
\cite[Theorem 4.19]{U}.
\end{proof}

We denote
by ${\mathcal D}(L)$ the domain of the  operator $L$.
 The following definition of atoms of Hardy spaces associated with operators  was
introduced in \cite{HLMMY}.

\begin{definition}{\label{def:atom}}
Let $M$ be a positive integer. A function $a\in L^2(TX)$ is called a $(1,2,M)$-atom associated with
$L$ if there exist a function $b\in {\mathcal D}(L^M)$ and a ball $B $
such that

\smallskip

{\rm  (i)}\ $a=L^Mb$,

\smallskip

{\rm  (ii)}\ {\rm supp}\  $L^{k}b\subset B, \ k=0, 1,\ldots,M$,

\smallskip

{\rm (iii)}\ $||(r_B^2L)^{k}b||_{L^2}\leq
r_B^{2M}v(B)^{-\frac{1}{2}},\ k=0,1,\ldots,M$, where $v(B)$ is the
volume of the ball $B$.
\end{definition}

\medskip
We can define the atomic Hardy space $H_{L,at,M}^1(TX)$ as follows.
First, we say that  $f=\sum \lambda_ja_j$ is an atomic $(1,
2,M)$-representation if $\{\lambda_j\}_{j=0}^\infty\in \ell^1$, each
$a_j$ is a $(1,2,M)$-atom and the sum converges in $L^2(TX)$. Then
set
$$
\mathbb{H}_{L,at,M}^1(TX):=\{f:\,f \,\,{\mbox{has an atomic $(1,
2,M)$-representation}}\},
$$
with the norm given by
$$
\|f\|_{\mathbb{H}_{L,at,M}^1(TX)}:=\inf\{\sum_{j=0}^\infty
|\lambda_j|: f=\sum_{j=0}^\infty \lambda_ja_j \,\,{\mbox{is an
atomic $(1, 2,M)$-representation}}\}.
$$
The space $H_{L,at,M}^1(TX)$ is then defined as the completion of
$\mathbb{H}_{L,at,M}^1(TX)$ with respect to this norm. According to
\cite[Theorem 2.5]{HLMMY}, if $M>D/4$ and $L$ satisfies
Davies-Gaffney estimate ${\rm (DG)}$, then Hardy space $H_L^1(TX)$
coincides with the atomic Hardy space $H_{L,at,M}^1(TX)$ and their
norms are equivalent.

\section{Boundedness of Riesz transforms on Hardy spaces of forms}
\label{sec:H1toL1} \setcounter{equation}{0}

To prove the boundedness of the Riesz transform on Hardy spaces
associated with self-adjoint operators on forms, we need the following lemma.
\begin{lemma}\label{le:H1toL1}
Assume that $T$ is a non-negative sublinear
operator and bounded from $L^2(TX)$ to $L^2(X)$. Also assume that for every
$(1,2,M)$-atom $a$, we have
$$
\|Ta\|_{L^1(X)}\leq C
$$
with constant $C$ independent of $a$. Then $T$ is bounded from
$H^1_L(TX)$ to $L^1(X)$.
\end{lemma}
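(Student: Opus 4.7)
The plan is to leverage the atomic characterization of $H^1_L(TX)$ recalled at the end of Section~\ref{sec:Hardy spaces}: once $M>D/4$, the space $H^1_L(TX)$ coincides with the atomic space $\mathbb{H}^1_{L,at,M}(TX)$ with equivalent norms. So the estimate $\|Tf\|_{L^1(X)}\lesssim \|f\|_{H^1_L(TX)}$ should be proved first for $f$ in the dense subspace of atomic representations converging in $L^2(TX)$, and then extended by density.

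Concretely, I would fix $f\in H^1_L(TX)\cap L^2(TX)$ together with an atomic $(1,2,M)$-representation $f=\sum_{j=0}^\infty \lambda_j a_j$ with $\sum_j|\lambda_j|\le 2\|f\|_{\mathbb{H}^1_{L,at,M}(TX)}$, the series converging in $L^2(TX)$. Writing $S_N=\sum_{j=0}^N\lambda_j a_j$, the non-negativity and sublinearity of $T$ give
$$
|TS_N|\le \sum_{j=0}^N|\lambda_j|\,|Ta_j|\quad\text{a.e.},
$$
so the atomic hypothesis $\|Ta_j\|_{L^1(X)}\le C$ yields
$$
\|TS_N\|_{L^1(X)}\le C\sum_{j=0}^N|\lambda_j|\le 2C\,\|f\|_{\mathbb{H}^1_{L,at,M}(TX)}.
$$
Since $T\colon L^2(TX)\to L^2(X)$ is bounded and $S_N\to f$ in $L^2(TX)$, we get $TS_N\to Tf$ in $L^2(X)$, hence a.e.\ along a subsequence, and Fatou's lemma upgrades the uniform $L^1$-bound on the partial sums to $\|Tf\|_{L^1(X)}\le 2C\|f\|_{\mathbb{H}^1_{L,at,M}(TX)}\lesssim \|f\|_{H^1_L(TX)}$.

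To finish, the inequality is extended by density from $H^1_L(TX)\cap L^2(TX)$ to $H^1_L(TX)$; one should check that $T$ admits an unambiguous extension to $H^1_L(TX)$ with values in $L^1(X)$, which follows from the estimate just obtained on the dense subspace and the standard limiting procedure for sublinear operators.

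The only mildly subtle point is the interchange of $T$ with an infinite sum: sublinearity alone does not immediately give $|Tf|\le \sum_j|\lambda_j||Ta_j|$, because the series defining $f$ converges in $L^2$ rather than pointwise. Routing the argument through the partial sums $S_N$, using the $L^2$-continuity of $T$ to obtain a.e.\ convergence of a subsequence, and then invoking Fatou's lemma in $L^1$ is what circumvents this obstacle cleanly; I expect this to be the only real difficulty in an otherwise routine argument.
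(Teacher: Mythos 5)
Your argument is correct, and it reconstructs precisely the standard atomic-to-$H^1$ extension argument behind the references the paper cites (Lemma~4.3 and Proposition~4.13 of \cite{HLMMY}): the paper itself gives no proof beyond that citation. Your identification of the real obstacle — that the $L^2$-convergent atomic series cannot be fed into a sublinear $T$ term by term — and your resolution via partial sums, the pointwise bound $|Tf-TS_N|\le T(f-S_N)$ (which uses both subadditivity and $T(\lambda g)=|\lambda|Tg$), $L^2$-continuity to extract an a.e.-convergent subsequence, and Fatou, is exactly the device used in the cited sources. The final density extension also works as you indicate, since the same pointwise inequality $|Tf-Tg|\le T(f-g)$ makes $(Tf_n)$ Cauchy in $L^1$ for any sequence $(f_n)\subset\mathbb{H}^1_{L,at,M}$ that is Cauchy in the $H^1_L$-norm, and independence of the limit from the approximating sequence follows the same way.
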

\begin{proof}
For the proof, we refer the reader to \cite[Lemma 4.3 and Proposition 4.13]{HLMMY}.
\end{proof}

 We now state  a criterion that allows  to derive  estimates on
Hardy spaces $H^p_L(TX)$. It is already stated in  \cite[Theorem
3.1]{DY} for spectral multipliers  $T = m(L)$ on  $H^p_L(X)$. We show that the arguments there
are valid for a general  linear operator $T$ which is
bounded on $L^2$. We do not require the  commutation of  $T$ with  the semigroup $e^{-tL}$.

 Let $U_j(B)=2^{j+1}B\setminus 2^jB=U_j$ when $j\geq2$ and $U_1(B)=4B$.
\begin{lemma}\label{le:H1toL1Key} Let $L$ be  a  non-negative self-adjoint operator acting on $L^2({TX})$
and satisfying the Davies-Gaffney estimates {\rm (DG)}.
 Let $T$ be a linear operator which is  bounded from $L^2(TX)$ to $L^2(X)$.
 Assume that there exist constants $M\geq 1$,  $ s>D/2$  and $C>0$
  such that for every  $j=1,2,\ldots,$
  \begin{eqnarray}\label{eq:SingularIteCondition}
\big\|T(I-e^{-r^2L})^M f\big\|_{L^2(U_j(B))}\leq C 2^{-js}
\|f\|_{L^2(B)}
\end{eqnarray}
 for every ball $B$ with    radius $r$ and for all $f\in L^2(TX)$ with supp $f\subset B$.
 Then the operator $T$ extends to a bounded operator from $H^1_L(TX)$
 to $L^1(X)$.
 \end{lemma}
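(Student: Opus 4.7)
The plan is to invoke Lemma~\ref{le:H1toL1}, which reduces the problem to establishing a uniform estimate $\|Ta\|_{L^1(X)}\le C$ for every $(1,2,M)$-atom $a=L^M b$ associated with a ball $B=B(x_B,r_B)$. Recall $\supp L^k b\subset B$ and $\|(r_B^2L)^k b\|_2\le r_B^{2M}v(B)^{-1/2}$ for $0\le k\le M$, so in particular $\|a\|_2\le v(B)^{-1/2}$. I would split $X = 4B \cup \bigcup_{j\ge 2}U_j(B)$ and bound $\|Ta\|_{L^1}$ piece by piece. On the central region $4B$, Cauchy--Schwarz together with the $L^2$-boundedness of $T$ gives
$$\|Ta\|_{L^1(4B)}\le v(4B)^{1/2}\|T\|_{L^2\to L^2}\|a\|_2\le C.$$

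On the outer annuli I perform the algebraic splitting
$$Ta = T(I-e^{-r_B^2 L})^M a + \sum_{k=1}^{M}\binom{M}{k}(-1)^{k+1} T\,e^{-k r_B^2 L}a,$$
which separates a ``high-frequency'' piece (the direct target of \eqref{eq:SingularIteCondition}) from a ``low-frequency'' piece that, via $a=L^M b$, reads $\sum_k c_k TL^M e^{-k r_B^2 L}b$. For the high-frequency term I would apply \eqref{eq:SingularIteCondition} with $f=a$ (supported in $B$) to get $\|T(I-e^{-r_B^2L})^M a\|_{L^2(U_j(B))}\le C\,2^{-js}v(B)^{-1/2}$. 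Combining this with Cauchy--Schwarz on $U_j(B)$ and the doubling bound $v(U_j(B))^{1/2}\le C\,2^{jD/2}v(B)^{1/2}$ yields $\|T(I-e^{-r_B^2L})^M a\|_{L^1(U_j)}\le C\,2^{j(D/2-s)}$, which is summable over $j\ge 2$ precisely because $s>D/2$.

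The principal obstacle is the low-frequency contribution $TL^M e^{-k r_B^2 L}b$, where $T$ is only known to be globally $L^2$-bounded. Since the functions $(r_B^2 L)^n b$ are supported in $B$ with $\|(r_B^2L)^n b\|_2\le r_B^{2M}v(B)^{-1/2}$, the Davies--Gaffney estimates extended to the analytic semigroup derivative $(tL)^M e^{-tL}$ provide the annular $L^2$-decay of the input,
$$\|L^M e^{-k r_B^2 L}b\|_{L^2(U_l(B))}\le C\,e^{-c\,2^{2l}}v(B)^{-1/2}.$$
To transport this decay through $T$ I would, on each outer annulus $U_j(B)$, split the input into a near piece supported in $2^{j-1}B$ and a far piece supported in $(2^{j-1}B)^c$. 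The far piece has $L^2$-norm bounded by $Ce^{-c\,2^{2j}}v(B)^{-1/2}$, so the global $L^2$-boundedness of $T$ combined with Cauchy--Schwarz against $v(U_j(B))^{1/2}$ delivers a super-exponentially small contribution that beats the polynomial doubling weight $2^{jD/2}$ and is summable in $j$. The near piece, supported in the larger ball $2^{j-1}B$ of radius $2^{j-1}r_B$, is attacked by re-applying \eqref{eq:SingularIteCondition} at this enlarged scale: after a second high/low split using $(I-e^{-(2^{j-1}r_B)^2 L})^M$, the new high-frequency part is controlled with a further decay factor of the form $2^{-\ell s}$ and the new low-frequency remainder is again amenable to the Davies--Gaffney decomposition of the semigroup. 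Iterating produces a doubly indexed geometric series in $j$ and $\ell$ whose convergence relies on the combination of the assumption $s>D/2$ and the rapid Davies--Gaffney decay. Summing all the pieces and going back to $\|Ta\|_{L^1(X)}$ yields the desired uniform bound on atoms, and hence the stated boundedness $T:H^1_L(TX)\to L^1(X)$ via Lemma~\ref{le:H1toL1}.
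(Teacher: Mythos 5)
The setup—reduction to atoms via Lemma~\ref{le:H1toL1}, the treatment of $4B$ by Cauchy--Schwarz and $L^2$-boundedness, and the direct application of \eqref{eq:SingularIteCondition} to the piece $T(I-e^{-r_B^2L})^M a$—is correct and matches the paper. But the handling of the low-frequency remainder $\sum_k c_k\,T e^{-kr_B^2L}a$ has a genuine gap, and it is precisely the part that requires the non-trivial algebraic idea of the proof.

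Concretely: your near/far split on the annulus $U_j(B)$ followed by re-application of \eqref{eq:SingularIteCondition} at the enlarged scale $R=2^{j-1}r_B$ does not produce the needed decay in $j$. The hypothesis at scale $R$ controls $T(I-e^{-R^2L})^M f$ on $U_\ell(B')$ for $B'=2^{j-1}B$ by a factor $2^{-\ell s}$; but $U_j(B)=4B'\setminus 2B'=U_1(B')$, so $\ell=1$ and the decay factor is a constant independent of $j$. Meanwhile the near piece has $L^2$-norm $\sim v(B)^{-1/2}$, so Cauchy--Schwarz against $v(U_j(B))^{1/2}\lesssim 2^{jD/2}v(B)^{1/2}$ gives a contribution $\sim 2^{jD/2}$ to $\|Ta\|_{L^1(U_j(B))}$, which is not summable over $j$. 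Iterating the high/low split at ever larger scales does not repair this: each iteration leaves a low-frequency remainder of comparable $L^2$-norm and at most a constant gain on the relevant annulus, so no geometric smallness appears. Davies--Gaffney only gives decay for the far piece; it cannot be transported through $T$, which has no known off-diagonal behaviour beyond the hypothesis on $T(I-e^{-r^2L})^M$.

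What the paper does instead is eliminate the low-frequency remainder algebraically by iterating the identity
$I=2r^{-2}\int_r^{\sqrt 2 r}t(I-e^{-t^2L})^M\,dt+\sum_\alpha C_{\alpha,M}r^{-2}\int_r^{\sqrt 2 r}t\,e^{-\alpha t^2L}\,dt$
$M$ times (following \cite{DY} and \cite{HM}). Each iteration converts one low-frequency factor into a factor of $L^{-1}$ in front of the operand; after $M$ steps one obtains a decomposition (formula \eqref{eq:decomf}) in which \emph{every} term contains $(I-e^{-t^2L})^M$ with $t\sim r$, and the inverse powers $L^{-\beta}$ are absorbed using the atom structure $a=L^M b$, leaving $L^{M-\beta}b$ which is still supported in $B$ with controlled $L^2$-norm. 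The remaining factors $F_{\beta,M,r}(L)$ are built from semigroups and inherit off-diagonal $L^2$ decay $e^{-c4^{|i-j|}}$ from Davies--Gaffney. One then covers the annulus $U_i(B^t)$ by $\sim 2^{iD}$ balls of the \emph{original} radius $t\sim r$ (not one ball of radius $2^ir$), applies \eqref{eq:SingularIteCondition} to each with decay $2^{-|i-j|s}$, and the super-exponential factor $e^{-c4^i}$ beats the polynomial count $2^{iD}$ in the sum over $i$, leaving a clean $2^{-js}$ bound that is summable because $s>D/2$. The two ideas your proposal is missing are thus (a) a decomposition of the identity that trades the low-frequency remainder for more high-frequency factors applied to $L^{-\beta}a=L^{M-\beta}b$, which is where the atom structure $a=L^Mb$ enters in an essential way, and (b) the covering of far annuli by balls of the fixed radius $r$ so that the hypothesis scale and the ball scale match, producing genuine decay $2^{-|i-j|s}$.
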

\begin{proof}
Let $a=L^Mb$ be a $(1,2,M)$-atom. By Lemma~\ref{le:H1toL1}, it is enough to prove that
\begin{eqnarray}\label{eq:atomL1}
\|Ta\|_{L^1(X)}\leq C
\end{eqnarray}
with constant $C$ independent of the atom $a$.

Denote $B:=B(x,r)$ the ball containing the support of the atom $a$.
We have
$$
\|Ta\|_{L^1(X)}\leq \sum_{j=1}^{\infty}\|Ta\|_{L^1(U_j(B))}.
$$
Note that by H\"older's inequality and $L^2$-boundedness of the
operator $T$
\begin{eqnarray*}
\|Ta\|_{L^1(4B)}\leq v(4B)^{\frac{1}{2}}\|Ta\|_{L^2(X)}\leq
Cv(B)^{\frac{1}{2}}\|a\|_{L^2(TX)}.
\end{eqnarray*}
By (iii) of Definition~\ref{def:atom}, $\|a\|_{L^2}\leq
v(B)^{-\frac{1}{2}}$ and thus
\begin{eqnarray}\label{eq:InBall}
\|Ta\|_{L^1(4B)}\leq Cv(B)^{\frac{1}{2}}v(B)^{-\frac{1}{2}}\leq C.
\end{eqnarray}

Then we only need to prove that there exist some constants
$\varepsilon>0$ and $C>0$ independent of the atom $a$ such that
\begin{eqnarray}\label{eq:OutBall}
\|Ta\|_{L^1(U_j(B))}\leq C2^{-j\varepsilon}
\end{eqnarray}
for $j=2,3,\ldots$

Following (8.7) and (8.8) in \cite{HM} or (3.5) in \cite{DY},
we write
\begin{eqnarray*}
I&=&2(r^{-2}\int_r^{\sqrt{2}r}tdt)\cdot I\\
&=&2r^{-2}\int_r^{\sqrt{2}r}t(I-e^{-t^2L})^Mdt
+\sum_{\alpha=1}^MC_{j,M}r^{-2}\int_r^{\sqrt{2}r}te^{-jt^2L}dt,
\end{eqnarray*}
where $C_{\alpha,M}$ are some constants depending only on $\alpha$
and $M$ only.  Using the fact that  $\partial_te^{-\alpha
t^2L}=-2\alpha tLe^{-\alpha t^2L}$ and  applying the procedure $M$ times, we
have for every function $f$ on $TX$,
\begin{eqnarray}\label{eq:decomf}
f&=&2^M\Big(r^{-2}\int_r^{\sqrt{2}r}t(I-e^{-t^2L})^Mdt\Big)^Mf\nonumber\\
&+&\sum_{\beta=1}^M r^{-2\beta}(I-e^{-r^2L})^\beta
\Big(r^{-2}\int_r^{\sqrt{2}r}t(I-e^{-t^2L})^Mdt\Big)^{M-\beta}
\sum_{\alpha=1}^{(2M-1)\beta}C_{\beta,\alpha,M}e^{-\alpha r^2L}L^{-\beta}f\nonumber\\
&:=&\sum_{\beta=0}^{M-1}r^{-2\beta}r^{-2}\int_r^{\sqrt{2}r}t
F_{\beta,M,r}(L)(I-e^{-t^2L})^ML^{-\beta}f dt\nonumber\\
&&+r^{-2M}F_{M,M,r}(L)(I-e^{-r^2L})^ML^{-M}f
\end{eqnarray}
where
$$
F_{\beta,M,r}(L)=(I-e^{-r^2L})^\beta
\Big(r^{-2}\int_r^{\sqrt{2}r}t(I-e^{-t^2L})^Mdt\Big)^{M-\beta-1}
\sum_{\alpha=1}^{(2M-1)\beta}C_{\beta,\alpha,M}e^{-\alpha r^2L}
$$
for $0\leq \beta\leq M-1$ and
$$
F_{M,M,r}(L)= \sum_{\alpha=1}^{(2M-1)M}C_{M,\alpha,M}e^{-\alpha
r^2L}.
$$
It follows from the Davies-Gaffney estimates that the operator
$F_{\beta,M,r}(L)$, $\beta=0,1,\cdots,M$, satisfies $L^2$
off-diagonal estimates, which means that there exist some constants $c,C>0$ such
that
\begin{eqnarray}\label{eq:offdia}
\|F_{\beta,M,r}(L)f\|_{L^2(U_j(B))}\leq C
e^{-c4^{|j-i|}}\|f\|_{L^2(U_i(B))}.
\end{eqnarray}
For the details, see \cite[pp. 307-309]{DY}.

By (i) of Definition~\ref{def:atom}, $a=L^Mb$. Then applying \eqref{eq:decomf}, we have
\begin{eqnarray*}
Ta&=&\sum_{\beta=0}^{M-1}r^{-2\beta}r^{-2}\int_r^{\sqrt{2}r}tT(I-e^{-t^2L})^MF_{\beta,M,r}(L)L^{M-\beta}b dt\nonumber\\
&&+r^{-2M}T(I-e^{-r^2L})^MF_{M,M,r}(L)b.
\end{eqnarray*}

Then by H\"older's inequality
\begin{eqnarray}\label{eq:L1toL2}
\lefteqn{\|Ta\|_{L^1(U_j(B))}}\nonumber\\
&\leq& C v(U_j(B))^{\frac{1}{2}}\sum_{\beta=0}^M
r^{-2\beta}\sup_{t\in[r,\sqrt 2 r]}
\|T(I-e^{-t^2L})^MF_{\beta,M,r}(L)L^{M-\beta}b\|_{L^2(U_j(B))}.
\end{eqnarray}
For $t\in [r,\sqrt2 r]$, let $B^t:=B(x,t)$.
Then
\begin{eqnarray}\label{eq:iDecomposition}
\lefteqn{\|T(I-e^{-t^2L})^MF_{\beta,M,r}(L)L^{M-\beta}b\|_{L^2(U_j(B^t))}}\nonumber\\
&\leq &
\sum_{i=1}^\infty\|T(I-e^{-t^2L})^M\chi_{U_i(B^t)}F_{\beta,M,r}(L)L^{M-\beta}b\|_{L^2(U_j(B^t))}.
\end{eqnarray}

For $|i-j|\leq 4$, by $L^2$ boundedness of $T(I-e^{-t^2L})^M$ and off-diagonal estimates \eqref{eq:offdia},
\begin{eqnarray}\label{eq:ijClose}
\lefteqn{ \|T(I-e^{-t^2L})^M\chi_{U_i(B^t)}F_{\beta,M,r}(L)L^{M-\beta}b\|_{L^2(U_j(B^t))} }\nonumber\\
&\leq & \|F_{\beta,M,r}(L)L^{M-\beta}b\|_{L^2(U_i(B^t))}\nonumber\\
&\leq &C e^{-c4^i}\|L^{M-\beta}b\|_{L^2}.
\end{eqnarray}

For $i\leq j-4$, we decompose $U_i(B^t)$ as the union of a  finite
number of balls $B_{\kappa,i}=B(x_{\kappa,i},t)$, the number is
compared with $2^{iD}$ and $\mbox{dist}\,(B_{\kappa,i},B)\geq
C2^ir$. For each $B_{\kappa,i}$, we can write
$$
U_j(B^t)\subset\bigcup_{\ell=0}^{2i}U_{j-i+\ell}(B_{\kappa,i}).
$$
Thus by condition~\eqref{eq:SingularIteCondition} and off-diagonal estimates \eqref{eq:offdia},
\begin{eqnarray*}
\lefteqn{ \|T(I-e^{-t^2L})^M\chi_{U_i(B^t)}F_{\beta,M,r}(L)L^{M-\beta}b\|_{L^2(U_j(B^t))}}\\
&\leq& \sum_\kappa \sum_\ell
\|T(I-e^{-t^2L})^M\chi_{B_{\kappa,i}}F_{\beta,M,r}(L)L^{M-\beta}b\|_{L^2(U_{j-i+\ell}(B_{\kappa,i}))}\\
&\leq& C\sum_\kappa \sum_\ell
2^{-(j-i+\ell)s}\|F_{\beta,M,r}(L)L^{M-\beta}b\|_{L^2(B_{\kappa,i})}\\
&\leq& C\sum_\kappa \sum_\ell 2^{-(j-i+\ell)s} e^{-c4^i}\|L^{M-\beta}b\|_{L^2}\\
&\leq& C2^{iD} 2^{-(j-i)s}e^{-c4^i}\|L^{M-\beta}b\|_{L^2}.
\end{eqnarray*}
Thus
\begin{eqnarray}\label{eq:ileqj}
\lefteqn{ \sum_{i=1}^{j-4}\|T(I-e^{-t^2L})^M\chi_{U_i(B^t)}F_{\beta,M,r}(L)L^{M-\beta}b\|_{L^2(U_j(B^t))} }\nonumber\\
&\leq& C 2^{-js}\|L^{M-\beta}b\|_{L^2}\sum_{i=1}^{j-4} 2^{iD+is}
e^{-c4^i}\leq C 2^{-js}\|L^{M-\beta}b\|_{L^2}.
\end{eqnarray}

For $i\geq j+4$, decompose $U_i(B^t)$ as the union of finite number
of balls $B_{\kappa,i}=B(x_{\kappa,i},t)$, the number is compared
with $2^{iD}$ and $\mbox{dist}\,(B_{\kappa,i},B)\geq C2^ir$. For any
$B_{\kappa,i}$, we can write
$$
U_j(B^t)\subset\bigcup_{\ell=-2}^{2j+1}U_{i-j+\ell}(B_{\kappa,i}).
$$
Thus by condition~\eqref{eq:SingularIteCondition} and off-diagonal estimates \eqref{eq:offdia},
\begin{eqnarray*}
\lefteqn{\|T(I-e^{-t^2L})^M\chi_{U_i(B^t)}F_{\beta,M,r}(L)L^{M-\beta}b\|_{L^2(U_j(B^t))}}\\
&\leq& \sum_\kappa \sum_\ell
\|T(I-e^{-t^2L})^M\chi_{B_{\kappa,i}}F_{\beta,M,r}(L)L^{M-\beta}b\|_{L^2(U_{i-j+\ell}(B_{\kappa,i}))}\\
&\leq& C\sum_\kappa \sum_\ell
2^{-(i-j+\ell)s}\|F_{\beta,M,r}(L)L^{M-\beta}b\|_{L^2(B_{\kappa,i})}\\
&\leq& C\sum_\kappa \sum_\ell 2^{-(i-j+\ell)s} e^{-c4^i}\|L^{M-\beta}b\|_{L^2}\\
&\leq& C2^{iD} 2^{-(i-j)s}e^{-c4^i}\|L^{M-\beta}b\|_{L^2}.
\end{eqnarray*}
Thus
\begin{eqnarray}\label{eq:igeqj}
\lefteqn{\sum_{i=j+4}^{\infty}\|T(I-e^{-t^2L})^M\chi_{U_i(B^t)}F_{\beta,M,r}(L)L^{M-\beta}b\|_{L^2(U_j(B^t))}
}\nonumber\\
&\leq& C 2^{js}\|L^{M-\beta}b\|_{L^2}\sum_{i=j+4}^{\infty} 2^{iD-is}
e^{-c4^i}\leq C 2^{-js}\|L^{M-\beta}b\|_{L^2}.
\end{eqnarray}

Combining the estimates \eqref{eq:iDecomposition}, \eqref{eq:ijClose}, \eqref{eq:ileqj} and \eqref{eq:igeqj}, it follows that
$$
\|T(I-e^{-t^2L})^MF_{\beta,M,r}(L)L^{M-\beta}b\|_{L^2(U_j(B^t))}\leq
C 2^{-js}\|L^{M-\beta}b\|_{L^2}.
$$
Noting that for $t\in [r,\sqrt2 r]$, we have
$$
U_j(B)\subset U_j( B^t)\cup U_{j-1}( B^t).
$$
Thus, by \eqref{eq:L1toL2} and by (iii) of Defintion~\ref{def:atom},
\begin{eqnarray*}
\|Ta\|_{L^1(U_j(B))}
&\leq & C
v(U_j(B))^{\frac{1}{2}}r^{-2\beta}2^{-js}\|L^{M-\beta}b\|_{L^2}\\
&\leq & C2^{-js}v(U_j(B))^{\frac{1}{2}}r^{-2\beta}r^{2\beta}v(B)^{-\frac{1}{2}}\\
&\leq & C2^{-j(s-D/2)},
\end{eqnarray*}
which proves \eqref{eq:OutBall}. Then combining estimate~\eqref{eq:InBall}, we
complete the proof of Lemma~\ref{le:H1toL1Key}.
\end{proof}

\medskip

Now we state the  main result  of  this section.

\begin{theorem}\label{th:Rp<2}
Let $M$ be a complete non-compact Riemannian manifold satisfying
assumptions \eqref{def:doubling2} with doubling dimension $D$, Gaussian upper bound~\eqref{def:Gassianbound} and $\alpha$-subcritical condition~\eqref{eq:conditionR-}. Then the
associated Riesz transform $d^* \DDelta^{-\f12}$ is

i) bounded from $H_{\DDelta}^1(\forms)$ to $L^1(M)$,

ii) bounded from $H_{\DDelta}^p(\forms)$ to $L^p(M)$ for all $p\in
[1,2]$,

iii)  bounded from $L^p(\forms)$ to $L^p(M)$ for all $p\in(1,2]$ if
$D\leq 2$ and all $p\in (p_0',2]$ if $D>2$ where
$p_0:=\frac{2D}{(D-2)(1-\sqrt{1-\alpha})}$.
\end{theorem}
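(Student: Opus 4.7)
The plan is to apply Lemma~\ref{le:H1toL1Key} to $T=d^*\DDelta^{-\f12}$ and $L=\DDelta$. The $L^2$-boundedness of $T$ is immediate from integration by parts, since $\|d^*\omega\|_{L^2(M)}^2=(dd^*\omega,\omega)\leq(\DDelta\omega,\omega)$ gives $\|d^*\DDelta^{-\f12}\|_{L^2\to L^2}\leq 1$. The substantive task is to verify the off-diagonal condition \eqref{eq:SingularIteCondition}, namely that for some integer $M>D/4$ and some $s>D/2$,
\begin{equation*}
\|d^*\DDelta^{-\f12}(I-e^{-r^2\DDelta})^M f\|_{L^2(U_j(B))}\leq C 2^{-js}\|f\|_{L^2(B)}
\end{equation*}
for every ball $B$ of radius $r$ and every $f\in L^2(\forms)$ with $\supp f\subset B$.

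I would use the subordination formula $\DDelta^{-\f12}=\tfrac{2}{\sqrt{\pi}}\int_0^\infty e^{-t^2\DDelta}\,dt$ to write
\begin{equation*}
d^*\DDelta^{-\f12}(I-e^{-r^2\DDelta})^M=\tfrac{2}{\sqrt{\pi}}\int_0^\infty d^*e^{-t^2\DDelta}(I-e^{-r^2\DDelta})^M\,dt,
\end{equation*}
and split the integral at $t=r$. Two ingredients are needed. First, an $L^2$ bound $\|d^* e^{-t^2\DDelta}\|_{L^2\to L^2}\leq C/t$, which follows from the spectral theorem applied to $\DDelta$ (via $\|d^*e^{-t^2\DDelta}g\|_2^2\leq(\DDelta e^{-2t^2\DDelta}g,g)$). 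Second, $L^2$ Davies--Gaffney off-diagonal estimates for $e^{-t\DDelta}$ on forms: these follow from the $\alpha$-subcriticality \eqref{eq:conditionR-} together with the B\"ochner identity $\DDelta=\nabla^*\nabla+R^+-R^-$, using a Kato--Simon (diamagnetic) comparison with the scalar semigroup $e^{-t\Delta}$, for which \eqref{def:Gassianbound} supplies the required Gaussian off-diagonal decay. The factor $(I-e^{-r^2\DDelta})^M$ furnishes the crucial cancellation $(r/t)^{2M}$ when $t\leq r$, which renders the integral near $0$ convergent and upgrades the on-diagonal/Gaussian ingredients into the claimed decay $2^{-js}$ for any $s$, provided $M$ is large enough.

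With \eqref{eq:SingularIteCondition} verified, Lemma~\ref{le:H1toL1Key} gives (i). Part (ii) follows by Marcinkiewicz-type interpolation for the Hardy spaces $H^p_{\DDelta}(\forms)$ (recalled after Definition~\ref{def2.2}) between the endpoint $(H^1_{\DDelta},L^1)$ and the trivial boundedness on $(H^2_{\DDelta},L^2)$. For (iii), the $\alpha$-subcriticality yields $(p_0',2)$-Davies--Gaffney estimates for $e^{-t\DDelta}$ with $p_0=\frac{2D}{(D-2)(1-\sqrt{1-\alpha})}$: this is a Stein-type interpolation between the $L^2$ Davies--Gaffney bound and the Kato comparison with the scalar semigroup, and is precisely the mechanism behind the parallel results of Magniez~\cite{M} and Assaad--Ouhabaz~\cite{AO}. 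Proposition~\ref{prop:HpCoinsidesLp} then identifies $H^p_{\DDelta}(\forms)=L^p(\forms)$ for $p_0'<p\leq 2$, and combining with (ii) gives (iii).

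The main obstacle will be the quantitative transfer of Gaussian-type off-diagonal bounds from $e^{-t\Delta}$ on functions to $e^{-t\DDelta}$ on $1$-forms, and further to $d^*e^{-t\DDelta}$, with constants tracking $\alpha$ sharply enough that the exponent $p_0=\frac{2D}{(D-2)(1-\sqrt{1-\alpha})}$ comes out correctly in (iii). The comparison via Kato's inequality is standard in principle, but handling the perturbation $R^-$ (which is only form-bounded, not pointwise small) together with the $d^*$ factor and the annular decomposition in $j$ requires careful bookkeeping to turn the abstract off-diagonal estimates into the hypothesis of Lemma~\ref{le:H1toL1Key} with an admissible exponent $s>D/2$.
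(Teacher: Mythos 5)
Your overall architecture is exactly the paper's: apply Lemma~\ref{le:H1toL1Key} with $T=d^*\DDelta^{-\frac{1}{2}}$, $L=\DDelta$, obtain i) from the off-diagonal condition \eqref{eq:SingularIteCondition}, deduce ii) by interpolation with the trivial $L^2$ bound, and get iii) by identifying $H^p_{\DDelta}(\forms)$ with $L^p(\forms)$ via Proposition~\ref{prop:HpCoinsidesLp} once the $(p_0',2)$-Davies--Gaffney estimate is known. The paper does not reprove \eqref{eq:SingularIteCondition} or $(DG_{p_0'})$: it quotes both from Magniez \cite{M} (the proof of Theorem~1.1 there, and Theorem~4.1). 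Your attempt to sketch those two inputs, however, contains genuine errors, so it is worth spelling them out.

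First, the spectral factor $(1-e^{-r^2\lambda})^M\lesssim\min\{1,(r^2\lambda)^M\}$ from $(I-e^{-r^2\DDelta})^M$ gives the gain $(r/t)^{2M}$ on the regime $t\ge r$ of the subordination integral (where $e^{-t^2\DDelta}$ sits on frequencies $\lambda\lesssim t^{-2}\le r^{-2}$), not on $t\le r$ as you wrote. For $t\le r$ it is the off-diagonal Gaussian factor $e^{-c(2^jr)^2/t^2}$ that drives both the convergence at $t\to0$ and the $2^{-js}$ decay, which means the first ingredient you need is a genuine Davies--Gaffney estimate $\|\chi_E\,d^*e^{-t\DDelta}\chi_F\|_{2\to2}\le C t^{-1/2}e^{-c\,\mathrm{dist}(E,F)^2/t}$ (as in Theorem~\ref{JoV}~i)), not merely the on-diagonal bound $\|d^*e^{-t^2\DDelta}\|_{2\to2}\le C/t$.

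Second, the Kato--Simon (diamagnetic) domination $|e^{-t\DDelta}\omega|\le e^{-t\Delta}|\omega|$ is only valid when $R^-=0$; it is available for $\nabla^*\nabla+R^+$ but fails for $\DDelta$ once a nontrivial $R^-$ is present, which is precisely the case of interest here. It therefore cannot be used to deduce off-diagonal estimates for $e^{-t\DDelta}$ in the way you describe. The $L^2$ Davies--Gaffney estimate for $e^{-t\DDelta}$ in fact needs neither subcriticality nor domination: $\DDelta$ is a non-negative self-adjoint second-order operator on a complete manifold, hence has finite speed of propagation, and Proposition~\ref{prop2.1} gives (DG) directly. Where the $\alpha$-subcriticality \eqref{eq:conditionR-} genuinely enters is in the perturbative (Duhamel-type) comparison of $e^{-t\DDelta}$ with $e^{-t(\nabla^*\nabla+R^+)}$ that yields the $L^p$-$L^2$ off-diagonal estimates $(DG_{p})$ for $p\in(p_0',2]$, with the exponent $p_0=\frac{2D}{(D-2)(1-\sqrt{1-\alpha})}$ coming out of the form-boundedness constant; ``Stein interpolation with the Kato comparison'' is not the mechanism and would not reproduce that exponent. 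These are exactly the technical points delegated to \cite{M}, and as written your sketch would not produce valid proofs of them.
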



\begin{proof}
We apply  Lemma~\ref{le:H1toL1Key}. Let $X=M$, $TX=\forms$ and $L=\DDelta$.

The estimate~\eqref{eq:SingularIteCondition} was
proved in the proof of Theorem~1.1 in~\cite{M} (the proof of estimate (34) in Page 23). This gives assertion i). 

Assertion ii) follows from i) by interpolation and the fact that  $d^* \DDelta^{-\f12}$ is bounded from $L^2(\forms)$ to $L^2(M)$.

Finally, the Davies-Gaffney estimate~$(DG_{p})$ was proved  in~\cite{M}, Theorem 4.1.  We then apply Proposition~\ref{prop:HpCoinsidesLp}
to obtain  iii).
\end{proof}

By duality and the commutation formula
\begin{equation}\label{commit}
 \DDelta d = d \Delta
 \end{equation}
 we obtain  the following corollary of
Theorem~\ref{th:Rp<2}.
\begin{corollary}\label{cor:Rp>2}
Let $M$ be a complete non-compact Riemannian manifold satisfying
assumptions \eqref{def:doubling2} with doubling dimension $D$, Gaussian upper bound~\eqref{def:Gassianbound} and $\alpha$-subcritical condition~\eqref{eq:conditionR-}. Then the
associated Riesz transform $d \Delta^{-\f12}$ is

i) bounded from $L^p(M)$ to $H_{\DDelta}^p(\forms)$ for all $p\in
[2,\infty)$,

ii) bounded from $L^p(M)$  to $L^p(\forms)$ for all $p\in
[2,\infty)$ if $D\leq 2$ and all $p\in [2,p_0)$ if $D>2$ where
$p_0:=\frac{2D}{(D-2)(1-\sqrt{1-\alpha})}$.
\end{corollary}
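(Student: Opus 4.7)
The strategy is pure duality plus the commutation formula \eqref{commit}. The plan is to obtain both statements by dualizing the corresponding assertions in Theorem~\ref{th:Rp<2}.

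First I would set up the duality. Recall from Section~\ref{sec:Hardy spaces} that $(H^{p'}_{\DDelta}(\forms))^* = H^p_{\DDelta}(\forms)$ whenever $\tfrac{1}{p}+\tfrac{1}{p'}=1$ (with $H^1_{\DDelta}$ paired against $BMO_{\DDelta}$ in the limit case if needed, though we will not need that case). Given this, the operator $T := d^*\DDelta^{-1/2}$, viewed between the relevant Banach spaces, has an adjoint $T^*$ that acts boundedly on the duals whenever $T$ is bounded on the primals. Explicitly, if $T : H^{p'}_{\DDelta}(\forms)\to L^{p'}(M)$ is bounded then $T^* : L^p(M)\to H^p_{\DDelta}(\forms)$ is bounded, and similarly in the $L^p$--$L^p$ setting.

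Next I would identify $T^*$ with $d\Delta^{-1/2}$. Since $\Delta$ and $\DDelta$ are non-negative self-adjoint, $\Delta^{-1/2}$ and $\DDelta^{-1/2}$ are self-adjoint, and $(d^*)^* = d$ on appropriate dense domains. The point is that the commutation formula \eqref{commit}, $\DDelta d = d\Delta$, promotes via the functional calculus (using for instance the representation $\DDelta^{-1/2} = \pi^{-1/2}\int_0^\infty e^{-t\DDelta}\,t^{-1/2}dt$ and the intertwining $e^{-t\DDelta}d = d\,e^{-t\Delta}$ that follows from \eqref{commit}) to the identity
\begin{equation*}
\DDelta^{-1/2} d f = d \Delta^{-1/2} f, \qquad f\in \mathcal{D}(\Delta^{1/2}).
\end{equation*}
Combining this with self-adjointness gives $T^* = (\DDelta^{-1/2})^* (d^*)^* = \DDelta^{-1/2} d = d \Delta^{-1/2}$ on a dense set, which identifies the adjoint with the Riesz transform.

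With $T^* = d\Delta^{-1/2}$ in hand, assertion i) follows by applying the duality to Theorem~\ref{th:Rp<2}~ii): for $p\in[2,\infty)$ we have $p'\in(1,2]$, so $T:H^{p'}_{\DDelta}(\forms)\to L^{p'}(M)$ is bounded, and dualizing yields $d\Delta^{-1/2} : L^p(M)\to H^p_{\DDelta}(\forms)$. Assertion ii) follows similarly from Theorem~\ref{th:Rp<2}~iii): if $D\le 2$ then $T$ is bounded on $L^{p'}$ for every $p'\in(1,2]$, giving $L^p$-boundedness of $d\Delta^{-1/2}$ for every $p\in[2,\infty)$; if $D>2$ then $T$ is bounded on $L^{p'}$ for $p'\in(p_0',2]$, and the dual range is exactly $p\in[2,p_0)$.

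The only step that needs a bit of care is the identification of the adjoint as a genuine operator (rather than just a formal one), namely justifying $\DDelta^{-1/2}d = d\Delta^{-1/2}$ on a dense domain and extending by the $L^2$-boundedness of both Riesz transforms; once this is done the remainder is a direct application of the two relevant parts of Theorem~\ref{th:Rp<2} together with the Hardy-space duality recalled in Section~\ref{sec:Hardy spaces}.
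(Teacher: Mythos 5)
Your proof is correct and takes essentially the same route as the paper: the paper simply invokes ``duality and the commutation formula $\DDelta d = d\Delta$'' to deduce the corollary from Theorem~\ref{th:Rp<2}, and your write-up fills in the identification $T^* = \DDelta^{-1/2} d = d\Delta^{-1/2}$ and the use of the $H^p_L$--$H^{p'}_L$ duality from Section~\ref{sec:Hardy spaces} exactly as intended.
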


As mentioned in the introduction, assertion ii) was already proved in \cite{M}.


\section{The Riesz transform for $p>2$}
\label{sec:pbigerthan2} \setcounter{equation}{0}

Our aim in this section is to investigate  boundedness of the Riesz transform $d\Delta^{-\frac{1}{2}}$ on $L^p$ for other values of $p>2$  that are not covered by the previous corollary. In order to do this we make an integrability  assumption  on  the Ricci curvature.

Our main result in this section is the following theorem.

\begin{theorem}\label{th:LPp>2}
Assume that the Riemannian manifold $M$ satisfies the doubling condition~\eqref{def:doubling2} and the
 Gaussian upper bound ~\eqref{def:Gassianbound}. Assume that the negative part $R^-$  of the Ricci curvature $R$ satisfies~\eqref{eq:vol}
 for some $p_1$ and $p_2$ such that  $3<p_2\leq D$. Then the Riesz transform $d\Delta^{-\frac{1}{2}}$ is bounded from  $L^p(M) $  to
 $L^p(\forms)$ for $1<p<p_2$.
\end{theorem}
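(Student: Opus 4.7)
The plan is to split the argument according to whether $p\le 2$ or $p>2$.

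On the range $1<p\le 2$, the conclusion is the theorem of Coulhon and Duong \cite{CD}, which uses only the doubling condition \eqref{def:doubling2} and the Gaussian bound \eqref{def:Gassianbound}; hypothesis \eqref{eq:vol} plays no role on this range.

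For $2<p<p_2$, my plan is to work through the B\"ochner decomposition
\[
\DDelta = H - R^-, \qquad H := \nabla^*\nabla + R^+,
\]
together with the commutation $\DDelta d = d\Delta$, which gives $d\Delta^{-1/2} = \DDelta^{-1/2} d$. Since $R^+\ge 0$, the Kato-type pointwise bound $|e^{-sH}\omega|\le e^{-s\Delta}|\omega|$ holds, so the Gaussian bound \eqref{def:Gassianbound} transfers to the action of $e^{-sH}$ on $1$-forms. The Duhamel identity
\[
e^{-t\DDelta} = e^{-tH} + \int_0^t e^{-(t-s)H}\,R^-\,e^{-s\DDelta}\,ds,
\]
iterated as a Neumann series and inserted into the representation $\DDelta^{-1/2} = c\int_0^\infty e^{-t\DDelta}\,dt/\sqrt t$, writes $d\Delta^{-1/2}$ as a series whose $k$-th term carries $k$ factors of $R^-$. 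Splitting each internal time integral at $s=1$ and factorizing $R^- = |R^-|^{1/2}\cdot|R^-|^{1/2}$ so that each factor is absorbed by an adjacent copy of $e^{-sH}$, the Gaussian kernel bound reduces the $L^p$-operator norm of every perturbative term to an integral of $\||R^-|^{1/2}/v(\cdot,\sqrt s)^{1/p_i}\|_{p_i}$ weighted against $ds/\sqrt s$, with $i=1$ for $s\le 1$ and $i=2$ for $s\ge 1$. These are exactly the ingredients summed by $\|R^-\|_{\rm vol}$, so hypothesis \eqref{eq:vol} makes the Neumann series converge in $L^p$ for every $p\in(2,p_2)$.

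The main obstacle is to carry out the iteration uniformly in $t>0$ while tracking the $\sqrt t$-weight introduced by the derivative $d$ at each stage. The threshold $p_2>3$ enters at exactly this step: the gradient $d$ costs a factor $t^{-1/2}$, the Gaussian $L^2\to L^{p_2}$ gain of $e^{-sH}$ on a ball of radius $\sqrt s$ contributes another power $v(\cdot,\sqrt s)^{1/p_2-1/2}$, and pairing these with the two copies of $|R^-|^{1/2}$ via H\"older and the $p_2$-half of \eqref{eq:vol} produces a time kernel that is integrable against $ds/\sqrt s$ only when $p_2>3$.
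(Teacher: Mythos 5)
There is a genuine gap in your proposal for $p>2$, and it is precisely the point the paper spends most of its effort on. Iterating the Duhamel identity
\[
e^{-t\DDelta} = e^{-tH} + \int_0^t e^{-(t-s)H}R^-e^{-s\DDelta}\,ds
\]
as a Neumann series and bounding the $k$-th term by (roughly) $C^k\|R^-\|_{vol}^k$ only yields a convergent series when $C\|R^-\|_{vol}<1$. Hypothesis \eqref{eq:vol} asserts \emph{finiteness}, not smallness, of $\|R^-\|_{vol}$, so nothing forces the geometric series to converge; for a large subcriticality constant $\alpha$ close to $1$ your argument fails. This is exactly why the authors do not expand around $H=\nabla^*\nabla+R^+$. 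Instead, in Step I they choose a compactly supported, smooth $W\ge 0$ so that the residual negative part $(W-R^-)^-$ has $\|\cdot\|_{vol}$-norm $<\varepsilon$ (by dominated convergence, pushing the truncation radius $r_0$ out) and hence is $\varepsilon$-subcritical for $\nabla^*\nabla+R^++(W-R^-)^+$; this gives good $L^p$--$L^q$ estimates for $e^{-t(\DDelta+W)}$ on the full range $p<p_0(\varepsilon)$ with $p_0(\varepsilon)$ as large as one wishes. They then prove $d(\Delta+W)^{-1/2}$ is bounded on $L^p$ for $p\in[2,p_2)$ via the wave-equation/commutator identity $\square_t\phi=-(e^{-t\sqrt{\Delta+W}}u)\,dW$ (Step II, following Devyver and Carron), and finally compare $\Delta$ with $\Delta+W$ via the Assaad--Ouhabaz resolvent-difference formula (Step III), exploiting that $W$ is compactly supported and smooth. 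None of these steps is a Neumann series in $R^-$.

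Two further concrete problems with your sketch. First, the leading term of your expansion would be $H^{-1/2}d$, and this is \emph{not} $d\Delta^{-1/2}$: the commutation identity $\DDelta d=d\Delta$ does not transfer to $H$, so you have replaced the operator you want by a different one whose boundedness you must prove separately (it is in fact provable here since $H$ is $0$-subcritical, but you must say so). Second, your heuristic explanation of the threshold $p_2>3$ (gradient costing $t^{-1/2}$ paired with the Gaussian gain on a ball of radius $\sqrt s$) does not match where that constraint actually bites: in the paper it appears only in Step II, in the convergence of the double integral $\int_0^\infty\int_0^\infty e^{-\gamma^2}\int_0^{t^2/4\gamma^2}\|g_{s,t}(u)\|_{L^p}\,ds\,d\gamma\,dt$ using the decay rates $(1+t)^{-p_2/p}$ and $(1+s)^{-p_2(\frac1{p_0'+\varepsilon}-\frac1p)/2}$, and it also crucially uses the volume lower bound $v(x_0,t)\ge Ct^{p_2}$, a consequence of \eqref{eq:vol} that your outline never derives but silently needs. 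The $1<p\le2$ case is indeed just Coulhon--Duong, as you say.
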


\begin{remark}
Suppose that $v(x,r) \ge Cv(r)$ for all $r > 0$. Then \eqref{eq:vol} is satisfied for $p_1$ and $p_2$ such that
$$\int_0^1 v(t)^{-1/p_1} dt < \infty\;\text{and}\;R^-\in L^{\frac{p_1}{2}}$$
and
$$\int_1^\infty v(t)^{-1/p_2} dt < \infty\;\text{and}\;R^-\in L^{\frac{p_2}{2}}.$$
In the particular case where $v(r) = r^D$ for all $r > 0$, \eqref{eq:vol} is then satisfied if  $R^-\in L^{\frac{D}{2}-\eta}\cap L^{\frac{D}{2}+\eta} $ for some $\eta>0$. Therefore, the Riesz transform  $d\Delta^{-\frac{1}{2}}$ is bounded from $L^p(M) $  to
 $L^p(\forms)$ for all $p$ with $1<p<D$. We recover and extend a result of Devyver \cite{D} who assumed
 $R^-\in L^{\frac{D}{2}-\eta}\cap L^{\infty}$.
 \end{remark}

Before we start the proof of the theorem we state  the following result on $L^p-L^q$ estimates for perturbations of $\DDelta$ by a
non-negative potential. The manifold $M$ satisfies the same assumptions as in the previous theorem.
\begin{theorem}\label{JoV}
Let $\mathcal{R}=\mathcal{R}^+-\mathcal{R}^-$ be a field of symmetric endomorphisms acting on $\Lambda^1T^*M$. Let $\alpha\in[0,1)$. We suppose that $\mathcal{R}^-$ is $\alpha$-subcritical for the operator $\nabla^*\nabla+\mathcal{R}^+$, that is for all $\omega\in \mathcal{C}_c^{\infty}(\Lambda^1T^*M)$ 
$$(\mathcal{R}^- \omega,\omega)\le\alpha ((\nabla^*\nabla+\mathcal{R}^+)\omega,\omega).$$
 Then for every open subsets $E$ and $F$ of $M$\\
i) $\| \chi_E d^*e^{-t(\nabla^*\nabla+\mathcal{R}^+-\mathcal{R}^-)} \chi_F \|_{2\to2} \le \frac{C}{\sqrt{t}} e^{-c \frac{dist(E,F)^2}{t}}$\\
ii) $\| e^{-t(\nabla^*\nabla+\mathcal{R}^+-\mathcal{R}^-)}  \chi_{B(x,r)} \|_{p\to q} \le \frac{C}{v(x, r)^{\frac{1}{p}-\frac{1}{q}}} \max(\frac{r}{\sqrt{t}}, \frac{\sqrt{t}}{r})^\beta,$\\
where $C, c $ and $\beta$ are positive constants. The assertion  ii) holds for all  $p \le q  \in
(1,\infty)$ if $D\leq 2$ and all $p\in (p_0',p_0), q \in [p, p_0)$ if $D>2$ where
$p_0:=\frac{2D}{(D-2)(1-\sqrt{1-\alpha})}$.
\end{theorem}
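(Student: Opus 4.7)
The proof rests on the fact that $\alpha$-subcriticality makes $H := \nabla^*\nabla + \mathcal{R}^+ - \mathcal{R}^-$ comparable, in the quadratic-form sense, to the positive operator $\nabla^*\nabla + \mathcal{R}^+$. Indeed, for every $\omega \in C_c^\infty(\forms)$,
\begin{equation*}
(1-\alpha)\bigl((\nabla^*\nabla+\mathcal{R}^+)\omega,\omega\bigr) \le (H\omega,\omega) \le \bigl((\nabla^*\nabla+\mathcal{R}^+)\omega,\omega\bigr),
\end{equation*}
so $(H\omega,\omega) \ge (1-\alpha)\|\nabla\omega\|_2^2$. Since pointwise $|d^*\omega|^2 \le C|\nabla\omega|^2$, this yields $\|d^* H^{-1/2}\|_{2\to 2} \le C$, hence by analyticity of the semigroup $\|d^* e^{-tH}\|_{2\to 2} \le C/\sqrt{t}$. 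This is the starting point for both i) and ii).

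\textbf{Part i).} I would obtain the off-diagonal Gaffney decay by Davies' perturbation trick. Fix a Lipschitz function $\phi$ with $|\nabla\phi|\le 1$, vanishing on $F$ and equal to $d(E,F)$ on $E$, and set $H_\lambda := e^{\lambda\phi}He^{-\lambda\phi}$. Expanding the quadratic form of $H_\lambda$ using $\nabla(e^{\pm\lambda\phi}\omega) = e^{\pm\lambda\phi}(\nabla\omega \pm \lambda\,d\phi\otimes\omega)$, the cross $\lambda$-terms are purely imaginary and cancel, while the $\lambda^2$-term is controlled by $\|\omega\|_2^2$:
\begin{equation*}
\mathrm{Re}(H_\lambda\omega,\omega) \ge (1-\alpha)\bigl(\|\nabla\omega\|_2^2 + (\mathcal{R}^+\omega,\omega)\bigr) - \lambda^2\|\omega\|_2^2.
\end{equation*}
Therefore $\|e^{-tH_\lambda}\|_{2\to2}\le e^{\lambda^2 t}$, and the same form estimate combined with $|d^*\omega|\le C|\nabla\omega|$ gives $\|d^*e^{-tH_\lambda}\|_{2\to2}\le (C/\sqrt t)\,e^{c\lambda^2 t}$ uniformly in $\lambda$. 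Unraveling the conjugation on $\chi_F$ and $\chi_E$, using that the commutator $[d^*,e^{\lambda\phi}]$ produces only a bounded multiplication of order $\lambda$ absorbed by $e^{-\lambda d(E,F)}$, and optimising $\lambda\sim d(E,F)/t$, yields
\begin{equation*}
\|\chi_E d^*e^{-tH}\chi_F\|_{2\to2} \le \frac{C}{\sqrt t}\,e^{-cd(E,F)^2/t}.
\end{equation*}

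\textbf{Part ii).} The strategy is to combine uniform $L^p$ boundedness of $e^{-tH}$ with a smoothing effect. Uniform $L^p$ boundedness on $(p_0', p_0)$ is obtained from $\alpha$-subcriticality as in \cite{M,AO}: domination reduces matters to the scalar semigroup $e^{-t\Delta}$, which is sub-Markovian and satisfies the Gaussian bound \eqref{def:Gassianbound}. At the critical time $t=r^2$, restricting to the ball $B(x,r)$ and applying the Gaussian heat-kernel bound together with Hölder's inequality yields $\|e^{-r^2H}\chi_{B(x,r)}\|_{p\to q}\le Cv(x,r)^{-(1/p-1/q)}$. For arbitrary $t$ one interpolates between this critical-time bound and the $L^2$ Davies–Gaffney estimate from i); the doubling condition \eqref{def:doubling2} implies
\begin{equation*}
v(x,\sqrt t)^{-1} \le C\,v(x,r)^{-1}\max\!\left(\tfrac{r}{\sqrt t},\tfrac{\sqrt t}{r}\right)^D,
\end{equation*}
which produces the corrective factor $\max(r/\sqrt t,\sqrt t/r)^\beta$ in the final bound.

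\textbf{Main obstacle.} The delicate point is within part i), when transferring the $d^*$-smoothing $\|d^* e^{-tH}\|_{2\to 2}\le C/\sqrt t$ to the conjugated semigroup $e^{-tH_\lambda}$ \emph{uniformly in} $\lambda$: the gain $1/\sqrt t$ comes from spectral theory applied to the self-adjoint $H$, but $H_\lambda$ is only sectorial. One must verify, using the form lower bound above, that $H_\lambda + c\lambda^2$ is sectorial with a uniform angle, so that the analytic-semigroup estimate $\|(H_\lambda+c\lambda^2)^{1/2}e^{-tH_\lambda}\|_{2\to2}\le C/\sqrt t$ holds with constants independent of $\lambda$, and then show that the lower-order $\lambda$-contribution coming from $[d^*,e^{\lambda\phi}]$ is dominated by the off-diagonal weight $e^{-\lambda d(E,F)}$ after optimisation. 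Once this is done, parts i) and ii) fit together in the now-classical framework of Davies--Gaffney and Nash–type interpolation.
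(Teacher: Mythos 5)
Your plan for part i) is sound and matches the standard route: the Davies exponential-perturbation trick applied to the sectorial form, with the crucial cancellation of the order-$\lambda$ cross terms in $\mathrm{Re}(H_\lambda\omega,\omega)$, and then uniform sectoriality of $H_\lambda + c\lambda^2$ (choosing $c$ depending on $\alpha$ so that the numerical range lies in a fixed sector) to extract the $1/\sqrt t$ factor. You correctly flag the delicate point — the $\lambda$-uniform analytic-semigroup estimate for a non-self-adjoint operator — and your proposed resolution via the coercivity $\mathrm{Re}(H_\lambda\omega,\omega)\ge(1-\alpha)\|\nabla\omega\|_2^2-\lambda^2\|\omega\|_2^2$ combined with $|d^*\omega|\le C|\nabla\omega|$ is essentially what is needed; the commutator $[d^*,e^{\lambda\phi}]$ contributes $O(\lambda)$ and is absorbed after optimising $\lambda\sim \mathrm{dist}(E,F)/t$. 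The paper delegates all of this to Theorem 4.1 of \cite{M}, but your argument is a valid independent sketch.

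Part ii), however, has a genuine gap. You invoke ``domination reduces matters to the scalar semigroup $e^{-t\Delta}$,'' but pointwise domination $|e^{-tH}\omega|\le e^{-t\Delta}|\omega|$ fails as soon as $\mathcal{R}^-\neq 0$: Kato's inequality gives domination for the \emph{positive} operator $\nabla^*\nabla+\mathcal{R}^+$, and the negative part $-\mathcal{R}^-$ works in the opposite direction (for a scalar Schr\"odinger operator $\Delta-V$ with $V\ge 0$ one has $e^{-t(\Delta-V)}\ge e^{-t\Delta}$, not $\le$). The $\alpha$-subcriticality is an integral, not pointwise, condition and does not restore domination. Indeed, if domination held, assertion ii) would be valid for all $1\le p\le q\le\infty$ with no restriction, contradicting the stated conclusion that the range must shrink to $(p_0',p_0)$ with $p_0=\frac{2D}{(D-2)(1-\sqrt{1-\alpha})}$ depending on $\alpha$. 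That specific exponent is the fingerprint of the actual mechanism used in \cite{M} (following Assaad–Ouhabaz \cite{AO}): one uses only the form coercivity $(H\omega,\omega)\ge(1-\alpha)\bigl(\|\nabla\omega\|_2^2+(\mathcal{R}^+\omega,\omega)\bigr)$ together with localized Gagliardo–Nirenberg/Sobolev-type inequalities derived from doubling and the scalar Gaussian bound, then iterates the resulting $L^2$–$L^q$ smoothing to reach an endpoint depending on $(1-\alpha)$; finally one weights with the Davies exponential to obtain the off-diagonal $L^p$–$L^q$ bound. Without this replacement, the derivation of both the uniform $L^p$ boundedness and the $L^p$–$L^q$ smoothing at the critical time $t=r^2$ is unjustified.
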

If $\mathcal{R}$ is the Ricci curvature, this theorem was proved in \cite{M}, Theorem 4.1 and Corollary 4.5. In the general case, the proof is the same as in \cite{M}.

\vspace{.3cm}

\begin{proof}[Proof of Theorem \ref{th:LPp>2}]  We shall proceed in three main steps.

\vspace{.2cm}

\indent{\bf Step I}. For a fixed point $x_0\in M$, we prove that there exist a positive number $r_0$ sufficiently large and a positive function $W\in C_c^\infty(M)$ with $\support W\subset B(x_0,r_0)$ such that  the following $L^p-L^q$ estimates hold for all $p\in(p_0',p_0)$, $q\in[p,p_0)$ and $t>1$
$$\|e^{-t(\DDelta+W)}\chi_{B(x_0,r_0)}\|_{p\to q}\leq
C_{x_0,r_0}t^{-p_2(\frac{1}{p}-\frac{1}{q})/2}.$$

For any $\varepsilon>0$, by the Dominated Convergence Theorem,  we can find a large enough $r_0$ such that
\begin{eqnarray*}
 \int_0^1 \left\|\frac{|R^-|^{\frac{1}{2}}}{v(\cdot,\ \sqrt{t})^{\frac{1}{p_1}}}\right\|_{L^{p_1}(B(x_0,\frac{r_0}{2})^c)}\frac{dt}{\sqrt{t}}+\int_1^\infty \left\|\frac{|R^-|^{\frac{1}{2}}}{v(\cdot,\ \sqrt{t})^{\frac{1}{p_2}}}\right\|_{L^{p_2}(B(x_0,\frac{r_0}{2})^c)}\frac{dt}{\sqrt{t}}<\varepsilon.
\end{eqnarray*}
We construct a function $0\leq \varphi\in C_c^\infty(M)$ such that
$\varphi=1$ in the ball $B(x_0,r_0/2)$,  $\varphi\leq 1$ in $B(x_0,r_0)\backslash B(x_0,r_0/2)$ and $\varphi=0$ outside the ball $B(x_0,r_0)$. Let $V=\varphi |R^-|$. Then $V$ is a compactly supported function and smooth except when
$|R^-|=0$. We  choose another function $W$ such that $W\geq V$, $W\in C_c^\infty(M)$ and $\support W\subset B(x_0, r_0)$.
We write
 $$\DDelta+W=\nabla^*\nabla+R^++(W-R^-)^+-(W-R^-)^-.$$
Note that $(W-R^-)^-=(W-V+V-R^-)^- \le (V-R^-)^-$ and $|(V-R^-)^-|=0$ in the ball $B(x_0,r_0/2)$ and
$|(V-R^-)^-|\leq |R^-|$ outside the ball $B(x_0,r_0/2)$. Therefore,   $\|(W-R^-)^-\|_{vol}<\varepsilon $.
 From Proposition 5.8 in \cite{M}, with $(W-R^-)^-$ and $\nabla^*\nabla+R^++(W-R^-)^+$ instead of
 $R^-$ and $\nabla^*\nabla+R^+$, we have
$$
((W-R^-)^-\omega,\omega)\leq C\|(W-R^-)^-\|_{vol}^2((\nabla^*\nabla+R^++(W-R^-)^+)\omega,\omega).
$$
Then we choose  $\varepsilon$   small enough to have $C\|(W-R^-)^-\|_{vol}<C\varepsilon<1$. Therefore,
 $(W-R^-)^-$ is $\varepsilon$-subcritical with respect to  $\nabla^*\nabla+R^++(W-R^-)^+$, i.e.
 \begin{equation}\label{crit}
((W-R^-)^-\omega,\omega)\leq  \varepsilon ( (\nabla^*\nabla+R^++(W-R^-)^+)\omega, \omega)  {\mbox{ for
all }} \omega\in C_c^\infty(\forms).
\end{equation}

By  Theorem \ref{JoV}, we obtain  $L^p-L^q$ off-diagonal estimates for  $e^{-t(\DDelta+W)}$
for all $p\in (p_0',p_0), q \in [p, p_0)$ with $p_0:=\frac{2D}{(D-2)(1-\sqrt{1-\varepsilon})}$.  
Hence  for  $p\in (p_0',p_0), q \in [p, p_0)$ and  $\sqrt{t}\geq r_0$
\begin{eqnarray}\label{eq:pto2bigt}
\|e^{-t(\DDelta+W)}\chi_{B(x_0,r_0)}\|_{p\to q}\leq
\|e^{-t(\DDelta+W)}\chi_{B(x_0,\sqrt{t})}\|_{p\to q}\leq
Cv(x_0,\sqrt{t})^{-(\frac{1}{p}-\frac{1}{q})},
\end{eqnarray}
and for $1<\sqrt{t}\leq r_0$,
\begin{eqnarray}\label{eq:pto2smallt}
\|e^{-t(\DDelta+W)}\chi_{B(x_0,r_0)}\|_{p\to q}\leq
\frac{C}{v(x_0,r_0)^{\frac{1}{p}-\frac{1}{q}}}(\frac{r_0}{\sqrt{t}})^{\beta}\leq
C_{x_0,r_0}t^{-p_2(\frac{1}{p}-\frac{1}{q})/2}.
\end{eqnarray}

Note that $|V|\leq |R^-|$ and so $\|V\|_{vol}\leq \|R^-\|_{vol}\leq
C$, which gives
$$
\int_1^\infty \left\| \frac{V^{\frac{1}{2}}}{v(\cdot,
\sqrt{t})^{\frac{1}{p_2}}} \right\|_{p_2}\frac{dt}{\sqrt{t}}\leq C.
$$
Since $\support V\subset B(x_0,r_0)$, we have for $x\in B(x_0,r_0)$
and $\sqrt{t}>r_0$
$$
v(x,\sqrt{t})=\frac{v(x,\sqrt{t})}{v(x_0,\sqrt{t})}v(x_0,\sqrt{t})\leq
C \left(1+\frac{\rho(x,x_0)}{\sqrt{t}}\right)^D v(x_0,\sqrt{t})\leq
C'v(x_0,\sqrt{t}).
$$
Thus
$$
\| V^{\frac{1}{2}} \|_{p_2}\int_{r_0^2}^\infty \frac{1}{v(x_0,
\sqrt{t})^{\frac{1}{p_2}}} \frac{dt}{\sqrt{t}}\leq
C'\int_{r_0^2}^\infty \left\| \frac{V^{\frac{1}{2}}}{v(\cdot,
\sqrt{t})^{\frac{1}{p_2}}}\right\|_{p_2}\frac{dt}{\sqrt{t}} \leq C.
$$
Note that if $V=0$, then $R^-=0$ in the ball $B(x_0,r_0/2)$ and so $\|R^-\|_{vol}<\varepsilon$. As a consequence, $R^-$ satisfies the
$\varepsilon$-subcritical condition~\eqref{eq:conditionR-}.
By Corollary 1.2 in \cite{M} or our Corollary~\ref{cor:Rp>2},
$d\Delta^{-\frac{1}{2}}$ is bounded on $L^p$ for $p\in (1,p_0)$ where
$p_0:=\frac{2D}{(D-2)(1-\sqrt{1-\varepsilon})}$. With our choice of $\varepsilon$, we have $d\Delta^{-\frac{1}{2}}$ bounded on $L^p$ for
$p\in (1,p_2)$\footnote{if  $R^- = 0$, then  the Ricci curvature is non-negative and it is well known that  the Riesz transform is bounded on $L^p$ for all $p \in (1, \infty)$}. In the sequel, we assume
that $V\neq 0$. Hence
$$
\int_{r_0}^\infty \frac{1}{v(x_0, s)^{\frac{1}{p_2}}}
ds\leq\int_{r_0^2}^\infty \frac{1}{v(x_0, \sqrt{t})^{\frac{1}{p_2}}}
\frac{dt}{\sqrt{t}}\leq C\| V^{\frac{1}{2}} \|^{-1}_{p_2}\leq C'.
$$
Let $f(s)=v(x_0, s)^{-\frac{1}{p_2}}$. Note that $f$ is a positive continuous decreasing
function. So by the first mean value theorem, there exists $\xi\in [r_0,t]$ such that
\begin{eqnarray*}
\int_{r_0}^t f(s)ds= f(\xi)(t-r_0)\geq f(t)t-f(r_0)r_0.
\end{eqnarray*}
We deduce that for $t>r_0$
\begin{eqnarray*}
0<tf(t)\leq r_0f(r_0)+\int_{r_0}^t f(s)ds\leq
r_0f(r_0)+\int_{r_0}^\infty f(s)ds\leq C.
\end{eqnarray*}
That is for $t>r_0$
$$
f(t)\leq C/t
$$
and
\begin{eqnarray}\label{eq:volumelowerbound}
v(x_0,t)\geq Ct^{p_2}.
\end{eqnarray}
Therefore by \eqref{eq:pto2bigt},  \eqref{eq:pto2smallt} and \eqref{eq:volumelowerbound}, we have the
following estimate  for $p\in (p_0',p_0), q \in [p, p_0)$ and all $t>1$
\begin{eqnarray}\label{eq:pto2tbiger1}
\|e^{-t(\DDelta+W)}\chi_{B(x_0,r_0)}\|_{p\to q}\leq
C_{x_0,r_0}t^{-p_2(\frac{1}{p}-\frac{1}{q})/2}.
\end{eqnarray}

\vspace{.2cm}

\indent{\bf Step II}. We prove that the operator $d(\Delta+W)^{-\frac{1}{2}}$ is bounded on $L^p$ for all $p\in [2,p_2)$.\\
 In order to do this we take   the difference with
$(\DDelta+W)^{-\frac{1}{2}}d$, that is,
$$
d(\Delta+W)^{-\frac{1}{2}}=d(\Delta+W)^{-\frac{1}{2}}-
(\DDelta+W)^{-\frac{1}{2}}d+(\DDelta+W)^{-\frac{1}{2}}d.
$$
It follows from \eqref{crit} and  \cite[Theorem 1.1]{M} or our Theorem~\ref{th:Rp<2} that   $d^*(\DDelta+W)^{-\frac{1}{2}}$ is bounded on $L^p(\forms)$
to $L^p(M)$ for all $p\in (p'_0,2]$  with again  $p_0:=\frac{2D}{(D-2)(1-\sqrt{1-\varepsilon})}$. By duality,  $(\DDelta+W)^{-\frac{1}{2}}d$ is bounded from  $L^p(M)$ to $L^p(\forms)$ for all
$p\in [2,p_0)$.   Choosing again $\varepsilon$ small enough such that $p_0\ge p_2$,  it follows that $(\DDelta+W)^{-\frac{1}{2}}d$ is bounded on $L^p(M)$ to $L^p(\forms)$ for all $p\in [2,p_2)$.\\
It  remains  to prove the boundedness of $d(\Delta+W)^{-\frac{1}{2}}-
(\DDelta+W)^{-\frac{1}{2}}d$.
For this part we follow  the strategy in   \cite[Section 5.2]{D} and \cite[Section 3.2]{C}. In these two papers, the authors assume a global Sobolev inequality on the manifold together with a polynomial lower bound on the volume. We adapt their ideas to our setting.\\
  Let $\square_t=-\frac{\partial^2}{\partial t^2}+(\DDelta+W)$. We have for $u\in C_c^\infty(M)$
\begin{eqnarray*}
\square_t\Big(de^{-t\sqrt{\Delta+W}}u-e^{-t\sqrt{\DDelta+W}}du\Big)=\square_t de^{-t\sqrt{\Delta+W}}u= - (e^{-t\sqrt{\Delta+W}}u)dW.
\end{eqnarray*}
The operator  $L_1 :=-\frac{\partial^2}{\partial t^2}$ with domain
$$
D(L_1)=W^{2,2}((0,\infty), L^2(\forms))\cap W_0^{1,2}((0,\infty), L^2(\forms))
$$
is  self-adjoint  on $L^2((0,\infty),L^2(\forms))$.  We define  $L_2$ as an ``extension" of  $\DDelta+W$ to
$L^2((0,\infty),L^2(\forms))$ in the following usual   way
$$
(L_2 w)(t,x):=(\DDelta+W)(w(t,\cdot))(x)
$$
with  domain
$$
D(L_2):=\{w\in L^2((0,\infty),L^2(\forms)): w(t,x)\in D(\DDelta+W) \,\mbox{for a.e. t}\}.
$$
The operators  $L_1$ and $L_2$ are self-adjoint and commute. Therefore,
$e^{-sL_1}e^{-sL_2}=e^{-sL_2}e^{-sL_1}$ is a strongly  continuous  semigroup whose  generator $\mathcal{C}$ is the closure of $L_1+L_2$ on the domain $D(L_1)\cap D(L_2)$ (see for example \cite{EN}, p. 64).

Let $\phi=de^{-t\sqrt{\Delta+W}}u-e^{-t\sqrt{\DDelta+W}}du$. Assume for a moment that
 $\phi\in D(L_1)\cap D(L_2)$ and that $\mathcal{C} $ is injective.   Then
$$
\mathcal{C} \phi = \square_t\phi= - (e^{-t\sqrt{\Delta+W}}u)dW,
$$
and we have
\begin{eqnarray*}
de^{-t\sqrt{\Delta+W}}u-e^{-t\sqrt{\DDelta+W}}du&=&\phi=-\mathcal{C}^{-1}\big ((e^{-t\sqrt{\Delta+W}}u)dW\big)\\
&=&-\int_0^\infty e^{-s\mathcal{C}}((e^{-t\sqrt{\Delta+W}}u)dW)ds\\
&=&-\int_0^\infty e^{-sL_1}e^{-sL_2}((e^{-t\sqrt{\Delta+W}}u)dW)ds\\
&=&\int_0^\infty\int_0^\infty K_s(t,\sigma) e^{-s(\DDelta+W)}((e^{-t\sqrt{\Delta+W}}u)dW)d\sigma ds,
\end{eqnarray*}
where
$$
K_s(t,\sigma)=\frac{e^{-\frac{(\sigma+t)^2}{4s}}-e^{-\frac{(\sigma-t)^2}{4s}}}{\sqrt{4\pi s}}
$$
is the heat kernel on the half-line $\mathbb{R}_+$ for the Dirichlet boundary condition at $0$.

Next we write
\begin{eqnarray*}
\lefteqn{d(\Delta+W)^{-\frac{1}{2}}u-(\DDelta+W)^{-\frac{1}{2}}du}\\
&&=\int_0^\infty (de^{-t\sqrt{\Delta+W}}u-e^{-t\sqrt{\DDelta+W}}du) dt\\
&&=\int_0^\infty \int_0^\infty\int_0^\infty K_s(t,\sigma) e^{-s(\DDelta+W)}((e^{-t\sqrt{\Delta+W}}u)dW)d\sigma ds dt=:G(u).
\end{eqnarray*}

Because $W\geq 0$ and $e^{-t\Delta}$ satisfies the  Gaussian upper bound~\eqref{def:Gassianbound}, it follows  that $e^{-t(\Delta+W)}$ also satisfies the   same bound (this follows from the domination property
$| e^{-t(\Delta + W)}f | \le e^{-t\Delta} |f |$). Therefore,
\begin{eqnarray*}
\|e^{-t(\Delta+W)}\chi_{B(x_0,r_0)}\|_{p\to 2}\leq Cv(x_0,\sqrt{t})^{-(\frac{1}{p}-\frac{1}{2})} (1 + \frac{r_0}{\sqrt{t}})^D.
\end{eqnarray*}

It follows from the subordination formula $e^{-t\sqrt{A}}=\frac{1}{\sqrt{\pi}}\int_0^{\infty} \frac{e^{-u}}{\sqrt{u}}e^{-\frac{t^2 A}{4u}}du$ that
\begin{eqnarray*}
\|e^{-t\sqrt{\Delta+W}}\chi_{B(x_0,r_0)}\|_{p\to 2}\leq
Cv(x_0,t)^{-(\frac{1}{p}-\frac{1}{2})} (1 + \frac{r_0}{t})^D.
\end{eqnarray*}

By the volume condition~\eqref{eq:volumelowerbound} and the doubling condition~\eqref{def:doubling2}, we have for all $p\in[1,2]$ and $t>1$,
\begin{eqnarray}\label{eq:pto2biger2}
\|e^{-t\sqrt{\Delta+W}}\chi_{B(x_0,r_0)}\|_{p\to 2}\leq
C_{x_0,r_0}t^{-p_2(\frac{1}{p}-\frac{1}{2})},
\end{eqnarray}
and similarly for all $p\geq 2$ and $t \ge 1$
\begin{eqnarray}\label{eq:pto2biger3}
\|\chi_{B(x_0,r_0)}e^{-t\sqrt{\Delta+W}}\|_{p\to \infty}\leq
C_{x_0,r_0}t^{-\frac{p_2}{p}}.
\end{eqnarray}

From these estimates we want to obtain that $\|G(u)\|_p\leq C\|u\|_{p}$ for all $p\in[2,p_2)$.
Since these estimates are valid for $t > 1$ we have  to
treat first the case of small  $t$ and $s$ in the definition of $G$.

Let $g_{s,t}(u):=e^{-s(\DDelta+W)}((e^{-t\sqrt{\Delta+W}}u)dW)$.
Then
\begin{eqnarray*}
G(u)&=&\int_0^\infty \int_0^\infty\int_0^\infty K_s(t,\sigma)
g_{s,t}(u)d\sigma ds dt\\
&=&\int_0^\infty \int_0^\infty\int_0^\infty
\frac{e^{-\frac{(\sigma+t)^2}{4s}}-e^{-\frac{(\sigma-t)^2}{4s}}}{\sqrt{4\pi
s}} g_{s,t}(u)d\sigma ds dt\\
&=&\int_0^\infty \int_0^\infty\frac{g_{s,t}(u)}{\sqrt{4\pi
s}}\Big[\int_0^\infty
e^{-\frac{(\sigma+t)^2}{4s}}-e^{-\frac{(\sigma-t)^2}{4s}} d\sigma
\Big]ds dt\\
&=&-\int_0^\infty \int_0^\infty\frac{g_{s,t}(u)}{\sqrt{4\pi
s}}\Big[\int_{-t}^t e^{-\frac{\sigma^2}{4s}}  d\sigma \Big]ds dt\\
&=&-\int_0^\infty \int_0^\infty\frac{2g_{s,t}(u)}{\sqrt{\pi
}}\Big[\int_{0}^{\frac{t}{2\sqrt{s}}} e^{-\gamma^2}  d\gamma \Big]ds dt\\
&=&-\frac{2}{\sqrt{\pi}}\int_0^\infty \int_0^\infty
e^{-\gamma^2}\Big[\int_{0}^{\frac{t^2}{4\gamma^2}} g_{s,t}(u)ds
\Big]d\gamma dt.
\end{eqnarray*}

Thus
\begin{eqnarray}\label{eq:ptopboundforG}
\|G(u)\|_{L^p}\leq \frac{2}{\sqrt{\pi}} \int_0^\infty \int_0^\infty
e^{-\gamma^2}\Big[\int_{0}^{\frac{t^2}{4\gamma^2}}
\|g_{s,t}(u)\|_{L^p}ds \Big]d\gamma dt.
\end{eqnarray}

For all $s\in [0,1]$ and $t\in [0,1]$, by the fact that the
semigroup $e^{-s(\DDelta+W)}$ and $e^{-t\sqrt{\Delta+W}}$ are
uniformly bounded on $L^p$ for all $s>0$ and $t>0$,
\begin{eqnarray*}
\|g_{s,t}(u)\|_{L^p}&=&\|e^{-s(\DDelta+W)}((e^{-t\sqrt{\Delta+W}}u)dW)\|_{L^p}
\leq C\|dW\|_{L^\infty}\|u\|_{L^p}.
\end{eqnarray*}

For all $s\in [0,1]$ and $t>1$, by the fact that the semigroup
$e^{-s(\DDelta+W)}$ is uniformly bounded on $L^p$ for all $s>0$ and
estimate \eqref{eq:pto2biger3},
\begin{eqnarray*}
\|g_{s,t}(u)\|_{L^p}&=&\|e^{-s(\DDelta+W)}((e^{-t\sqrt{\Delta+W}}u)dW)\|_{L^p}\\
&\leq& C\|(e^{-t\sqrt{\Delta+W}}u)dW\|_{L^p}\\
&\leq&
C\|dW\|_{L^p}\|\chi_{B(x_0,r_0)}e^{-t\sqrt{\Delta+W}}u\|_{L^\infty}\\
&\leq& CC_{x_0,r_0}\|dW\|_{L^p}t^{-\frac{p_2}{p}}\|u\|_{L^p}.
\end{eqnarray*}

For all $s>1$ and $t\in [0,1]$, by the fact that the semigroup
$e^{-t\sqrt{\Delta+W}}$ is uniformly bounded on $L^p$ for all $t>0$
and estimate \eqref{eq:pto2tbiger1},
\begin{eqnarray*}
\|g_{s,t}(u)\|_{L^p}&=&\|e^{-s(\DDelta+W)}((e^{-t\sqrt{\Delta+W}}u)dW)\|_{L^p}\\
&\leq& C \|e^{-s(\DDelta+W)}\chi_{B(x_0,r_0)}\|_{(p_0'+\varepsilon)
\to p}
\|(e^{-t\sqrt{\Delta+W}}u)dW\|_{L^{p_0'+\varepsilon}}\\
&\leq&
CC_{x_0,r_0} s^{-p_2(\frac{1}{p_0'+\varepsilon}-\frac{1}{p})/2}
\|dW\|_{L^{1/(\frac{1}{p_0'+\varepsilon}-\frac{1}{p})}}\|e^{-t\sqrt{\Delta+W}}u\|_{L^p}\\
&\leq&CC_{x_0,r_0}\|dW\|_{L^{1/(\frac{1}{p_0'+\varepsilon}-\frac{1}{p})}}
s^{-p_2(\frac{1}{p_0'+\varepsilon}-\frac{1}{p})/2} \|u\|_{L^p}.
\end{eqnarray*}

Similarly, for all $s>1$ and $t>1$,
\begin{eqnarray*}
\|g_{s,t}(u)\|_{L^p}&=&\|e^{-s(\DDelta+W)}((e^{-t\sqrt{\Delta+W}}u)dW)\|_{L^p}\\
&\leq& C \|e^{-s(\DDelta+W)}\chi_{B(x_0,r_0)}\|_{(p_0'+\varepsilon)
\to p}
\|(e^{-t\sqrt{\Delta+W}}u)dW\|_{L^{p_0'+\varepsilon}}\\
&\leq& CC_{x_0,r_0}
s^{-p_2(\frac{1}{p_0'+\varepsilon}-\frac{1}{p})/2}
\|dW\|_{L^{1/(\frac{1}{p_0'+\varepsilon}-\frac{1}{p})}}\|\chi_{B(x_0,r_0)}e^{-t\sqrt{\Delta+W}}u\|_{L^p}\\
&\leq&CC_{x_0,r_0}\|dW\|_{L^{1/(\frac{1}{p_0'+\varepsilon}-\frac{1}{p})}}
s^{-p_2(\frac{1}{p_0'+\varepsilon}-\frac{1}{p})/2}\|\chi_{B(x_0,r_0)}\|_{L^{p}}
\|\chi_{B(x_0,r_0)}e^{-t\sqrt{\Delta+W}}u\|_{L^\infty}\\
&\leq&CC_{x_0,r_0}\|dW\|_{L^{1/(\frac{1}{p_0'+\varepsilon}-\frac{1}{p})}}
s^{-p_2(\frac{1}{p_0'+\varepsilon}-\frac{1}{p})/2}\|\chi_{B(x_0,r_0)}\|_{L^{p}}
t^{-\frac{p_2}{p}}\|u\|_{L^p}.
\end{eqnarray*}

Combing the above four estimates, we get
$$
\|g_{s,t}(u)\|_{L^p}\leq
C(1+t)^{-\frac{p_2}{p}}(1+s)^{-p_2(\frac{1}{p_0'+\varepsilon}-\frac{1}{p})/2}\|u\|_{L^p}.
$$

Putting this estimate into estimate \eqref{eq:ptopboundforG} and
noting that $p>2$, $p_2>3$  and $p<p_2$, we have
$$
\|G(u)\|_{L^p}\leq C\|u\|_{L^p}.
$$

Hence, $d (\Delta + W)^{-\frac{1}{2}}$ is bounded on $L^p$ for $p \in [2, p_2)$. \\

Recall that we have used in the previous  proof that $\phi\in D(L_1)\cap D(L_2)$ and $\mathcal{C}$ is injective. Now we prove
these two properties.

If $\mathcal{C}\psi =0$,  then $\psi = e^{-sL_1}e^{-sL_2} \psi =  e^{-sL_2}e^{-sL_1} \psi$ and the  self-adjointness
of $L_1$ and $L_2$ imply that $\psi \in D(L_1) \cap D(L_2)$. Hence
$$\langle\frac{\partial^2}{\partial
t^2}\psi ,\psi \rangle_{L^2(\R_+\times TM)}=0, $$
which implies  $\partial_t \psi=0$ and thus $\psi(x,t)=\psi(x)$. In addition, $\psi \in W^{1,2}_0$ implies $\psi(x, 0) = 0$ and hence $\psi = 0$. This shows that $\mathcal{C}$ is injective.

Now we prove that $\phi \in D(L_1) \cap D(L_2)$.
For fixed $t$, because $u $ and $ W$ belong to $C_c^\infty$, it is easy to see that $\phi\in L^2$ and $(\DDelta+W)\phi \in L^2$. Thus $\phi\in D(L_2)$. Note that
$$
\lim_{t\to 0} \phi=\lim_{t\to 0} de^{-t\sqrt{\Delta+W}}u-  \lim_{t\to 0}e^{-t\sqrt{\DDelta+W}}du=du-du=0.
$$
($\underset{t\to 0}{\lim}\, de^{-t\sqrt{\Delta+W}}u=du$ comes from that fact $d(\Delta+W)^{-1/2}$ is bounded on $L^2$ and $(\Delta+W)^{1/2}e^{-t\sqrt{\Delta+W}}u$ converges to $(\Delta+W)^{1/2}u$).
It remains  to check that $\phi\in W^{2,2}((0,\infty), L^2(\forms))$.
We write $\phi=\phi_1-\phi_2$ where $\phi_1:=de^{-t\sqrt{\Delta+W}}u$ and $\phi_2:=e^{-t\sqrt{\DDelta+W}}du$. Then
\begin{eqnarray*}
\int_0^\infty \|\phi_1\|_{L^2}^2 dt
&=& \int_0^\infty \|de^{-t\sqrt{\Delta+W}}u\|_{L^2}^2 dt\\
&=& \int_0^\infty \|d(\Delta+W)^{-1/2}(\Delta+W)^{1/2}e^{-t\sqrt{\Delta+W}}u\|_{L^2}^2 dt\\
&\leq &\int_0^\infty \|(\Delta+W)^{1/2}e^{-t\sqrt{\Delta+W}}u\|_{L^2}^2 dt\\
&\leq &\int_0^1 \|(\Delta+W)^{1/2}u\|_{L^2}^2 dt+\int_1^\infty \|t(\Delta+W)^{1/2}e^{-t\sqrt{\Delta+W}}u\|_{L^2}^2 \frac{dt}{t^2}\\
&\leq &\|(\Delta+W)^{1/2}u\|_{L^2}^2 +\int_1^\infty \|u\|_{L^2}^2 \frac{dt}{t^2}\\
&\leq & C.
\end{eqnarray*}

Similarly
\begin{eqnarray*}
\int_0^\infty \|\partial_t\phi_1\|_{L^2}^2 dt
&=& \int_0^\infty \|d\sqrt{\Delta+W}e^{-t\sqrt{\Delta+W}}u\|_{L^2}^2 dt\\
&=& \int_0^\infty \|d(\Delta+W)^{-1/2}(\Delta+W)e^{-t\sqrt{\Delta+W}}u\|_{L^2(X)}^2 dt\\
&\leq &\int_0^\infty \|(\Delta+W)e^{-t\sqrt{\Delta+W}}u\|_{L^2}^2 dt\\
&\leq &\int_0^1 \|(\Delta+W)u\|_{L^2}^2 dt+\int_1^\infty \|t^2(\Delta+W)e^{-t\sqrt{\Delta+W}}u\|_{L^2}^2 \frac{dt}{t^4}\\
&\leq &\|(\Delta+W)u\|_{L^2}^2 +\int_1^\infty \|u\|_{L^2}^2 \frac{dt}{t^4}\\
&\leq & C,
\end{eqnarray*}
and
\begin{eqnarray*}
\int_0^\infty \|\partial^2_t\phi_1\|_{L^2}^2 dt
&=& \int_0^\infty \|d{(\Delta+W)}e^{-t\sqrt{\Delta+W}}u\|_{L^2}^2 dt\\
&\leq &\|(\Delta+W)^{3/2}u\|_{L^2}^2 +\int_1^\infty \|u\|_{L^2}^2 \frac{dt}{t^6}\\
&\leq & C.
\end{eqnarray*}

By the same calculations, we can prove that
$\phi_2, \partial_t\phi_2, \partial^2_t\phi_2 \in  L^2((0,\infty), L^2(X))$.
This shows that  $\phi\in D(L_1)\cap D(L_2)$.\\

 \indent{\bf Step III}. We prove that $d\Delta^{-\frac{1}{2}}$ is bounded on $L^p$.
We write
\begin{eqnarray}\label{eq:decomp}
d\Delta^{-\frac{1}{2}}=(d\Delta^{-\frac{1}{2}}-d(\Delta+W)^{-\frac{1}{2}})+ d(\Delta+W)^{-\frac{1}{2}}.
\end{eqnarray}

We  have proved in the previous step that $d(\Delta+W)^{-\frac{1}{2}}$ is bounded on $L^p$
for all $p\in [2,p_2)$.  Now we  prove the boundedness of
$d\Delta^{-\frac{1}{2}}-d(\Delta+W)^{-\frac{1}{2}}$. Following the
ideas in  \cite[Section 3.6]{AO}  with $A_0=\Delta+W$ and $A=\Delta$, we write
\begin{eqnarray*}
&&d\Delta^{-\frac{1}{2}}-d(\Delta+W)^{-\frac{1}{2}}\\
&&\quad= c \int_0^\infty t^{\frac{1}{2}}d(I+tA_0)^{-1}W(I+tA)^{-1}dt\\
&&\quad= c \int_0^\infty dA_0^{-\frac{1}{2}}(tA_0)^{\frac{1}{2}}(I+tA_0)^{-\frac{1}{2}} (I+tA_0)^{-\frac{1}{2}}W^{\frac{1}{2}} W^{\frac{1}{2}}(I+tA)^{-1}dt.
\end{eqnarray*}

The operator  $dA_0^{-\frac{1}{2}}=d(\Delta+W)^{-\frac{1}{2}}$ is bounded on $L^p$ for all $p\in [2,p_2)$. Next,
$(tA_0)^{\frac{1}{2}}(I+tA_0)^{-\frac{1}{2}}$ is uniformly bounded (in $t > 0$) on $L^p$ by the holomorphic functional calculus and the fact that $A_0$ has a Gaussian bound.
For the last two terms in the previous integral, it suffices to prove that
\begin{eqnarray*}
\int_0^\infty \|W^{\frac{1}{2}}e^{-sL}\|_{p\to p}\frac{ds}{\sqrt{s}}\leq C
\end{eqnarray*}
for all $p\in [2,p_2)$, where $L$ is $A_0$ or $A$.
Noting that heat kernel of $A_0$ or $A$ satisfies Gaussian upper bound,  so by volume condition (\ref{eq:volumelowerbound}) and doubling condition, we have for all $p\in [2,p_2)$ and $t>1$,
\begin{eqnarray*}
\|\chi_{B(x_0,r_0)}e^{-tL}\|_{p\to \infty}\leq C_{x_0,r_0}t^{-\frac{p_2}{2p}}.
\end{eqnarray*}

Since $W\in C_c^\infty(M)$
\begin{eqnarray*}
\int_0^1 \|W^{\frac{1}{2}}e^{-sL}\|_{p\to p}\frac{ds}{\sqrt{s}}&\leq & C\|W\|_\infty^{\frac{1}{2}}\int_0^1 \|e^{-sL}\|_{p\to p}\frac{ds}{\sqrt{s}}\leq C\|W\|_\infty^{\frac{1}{2}}\int_0^1 \frac{ds}{\sqrt{s}}\leq C,
\end{eqnarray*}
and using $\support W\subset B(x_0,r_0)$, we deduce that for $p<p_2$
\begin{eqnarray*}
\int_1^\infty \|W^{\frac{1}{2}}e^{-sL}\|_{p\to p}\frac{ds}{\sqrt{s}}&\leq & C\|W^{\frac{1}{2}}\|_p\int_1^\infty \|\chi_{B(x_0,r_0)}e^{-sL}\|_{p\to \infty}\frac{ds}{\sqrt{s}}\\
&\leq& C\|W^{\frac{1}{2}}\|_p\int_1^\infty C_{x_0,r_0} s^{-\frac{p_2}{2p}}\frac{ds}{\sqrt{s}}\leq C.
\end{eqnarray*}

For more details about this last step, we refer to  \cite[Section 3.6]{AO}.
\end{proof}

\section{Riesz transforms of Schr\"odinger operators }
\label{sec:SchrodingerOP} \setcounter{equation}{0}

In this section, we give some results on the  boundedness of  Riesz transforms  $d A^{-\f12}$ of Schr\"odinger operators
$A=\Delta+V$ with signed potential $V=V^+-V^-$.

We start with  following result.

\begin{theorem}\label{th:Schrodinger0}
Let $M$ be a complete non-compact Riemannian manifold satisfying
assumptions \eqref{def:Gassianbound} and \eqref{def:doubling2} with
doubling dimension $D$. Let $A$ be the Schr\"odinger operator with
signed potential $V$ such that $V^+\in L_{loc}^1$ and $V^-$
satisfies $\alpha$-subcritical condition \eqref{eq:conditionV-}. Then the associated Riesz
transform $d A^{-\frac{1}{2}}$ is

i): bounded from $H_A^1(M)$ to $L^1(\forms)$,

ii): bounded from $H_A^p(M)$ to $L^p(\forms)$ for all $p\in [1,2]$,

iii): bounded on $L^p(M)$ for all $p\in(1,2]$ if $D\leq 2$ and all
$p\in (p_0',2]$ if $D>2$ where
$p_0:=\frac{2D}{(D-2)(1-\sqrt{1-\alpha})}$.
\end{theorem}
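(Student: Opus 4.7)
The plan is to carry out the proof in close parallel with Theorem~\ref{th:Rp<2}, interpreting the Schr\"odinger operator $A=\Delta+V^+-V^-$ on functions as the scalar analogue of the Hodge--de Rham Laplacian $\DDelta=\nabla^*\nabla+R^+-R^-$ on $1$-forms, with the $\alpha$-subcriticality \eqref{eq:conditionV-} of $V^-$ playing the role of \eqref{eq:conditionR-}. For assertion (i) I would apply Lemma~\ref{le:H1toL1Key} to the sublinear operator $T=dA^{-\f12}$ with $L=A$ and $TX$ the trivial line bundle over $M$. Three ingredients are required: (a) the $L^2$-boundedness of $T$, (b) the Davies--Gaffney property (DG) for $e^{-tA}$, and (c) the annular estimate \eqref{eq:SingularIteCondition} for some $s>D/2$.

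Ingredient (a) is an immediate consequence of \eqref{eq:conditionV-}: integration by parts yields $(1-\alpha)\|du\|_2^2\le\langle Au,u\rangle$ for $u\in C_c^\infty(M)$, hence $\|dA^{-\f12}\|_{2\to 2}\le(1-\alpha)^{-1/2}$. Ingredient (b) follows from the pointwise domination $|e^{-tA}f|\le e^{-t(\Delta-V^-)}|f|$ together with standard subcritical perturbation arguments; equivalently, it is the scalar instance of Theorem~\ref{JoV}(i) applied with $\mathcal{R}=V$, whose proof simplifies in the scalar setting since no Bochner step is needed.

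The main obstacle is ingredient (c), namely
$$\bigl\|dA^{-\f12}(I-e^{-r^2A})^M f\bigr\|_{L^2(U_j(B))}\le C\,2^{-js}\|f\|_{L^2(B)}$$
for every ball $B$ of radius $r$ and some $s>D/2$. I would mimic the proof of estimate~(34) in~\cite{M}. Using subordination, write
$$dA^{-\f12}(I-e^{-r^2A})^M=\frac{1}{\sqrt{\pi}}\int_0^\infty d\,e^{-sA}(I-e^{-r^2A})^M\frac{ds}{\sqrt{s}},$$
split the integral at $s=r^2$, and combine the $L^2$ gradient bound $\|de^{-sA}\|_{2\to 2}\le C/\sqrt{s}$ (another consequence of $\alpha$-subcriticality, applied to $u=e^{-sA}v$) with off-diagonal decay for $(I-e^{-r^2A})^M$ coming from finite speed propagation via Proposition~\ref{prop2.1}. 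The iterated factor $(I-e^{-r^2A})^M$ supplies arbitrary polynomial decay $2^{-js}$; this is the scalar transcription of the form-valued computation of~\cite{M}, and the argument transfers verbatim.

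Granted (i), assertion (ii) follows by Marcinkiewicz-type interpolation on the Hardy scale between the $H^1_A\to L^1$ bound and the $H^2_A\to L^2$ bound (which is simply (a) combined with $H^2_A=\overline{R(A)}$), invoking Theorem~9.7 of~\cite{HLMMY}. For (iii) it suffices by Proposition~\ref{prop:HpCoinsidesLp} to verify the generalized Davies--Gaffney estimate $(DG_{p_0'+\varepsilon})$ for $e^{-tA}$ for every $\varepsilon>0$, since then $H^p_A\cong L^p$ for $p\in(p_0',2]$ and the conclusion will follow by composing this identification with (ii). The required $(DG_{p_0'+\varepsilon})$ is exactly the scalar version of Theorem~\ref{JoV}(ii) applied to $A$, which delivers the stated range with the same $p_0=\frac{2D}{(D-2)(1-\sqrt{1-\alpha})}$.
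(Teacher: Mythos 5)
Your proposal follows the paper's own proof almost verbatim: assertion (i) is reduced via Lemma~\ref{le:H1toL1Key} to the annular estimate \eqref{eq:SingularIteCondition} for $T=dA^{-1/2}$ with $L=A$, assertion (ii) follows by interpolation on the Hardy scale, and assertion (iii) by identifying $H^p_A(M)$ with $L^p(M)$ through Proposition~\ref{prop:HpCoinsidesLp} and a generalized Davies--Gaffney estimate $(DG_{p})$. The only stylistic difference is that where you sketch direct derivations of the key ingredients (the annular estimate and $(DG_{p})$, framed as scalar analogues of Theorem~\ref{JoV}), the paper simply cites Assaad and Ouhabaz \cite{AO}, where exactly these estimates were already proved (estimate~(43) and Theorem~3.4 there).
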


\begin{proof} Under these  assumptions on $V$ it is proved in ~\cite{AO} (see the proof of estimate (43) in page 1127) that
\begin{eqnarray}\label{eq:SingularIteCondition2}
\big\| dA^{-\frac{1}{2}}(I-e^{-r^2A})^M f\big\|_{L^2(U_j(B))}\leq C 2^{-js}
\|f\|_{L^2(B)}
\end{eqnarray}
 for every ball $B$ with    radius $r$ and for all $f\in L^2(M)$ with supp $f\subset B$. Here $M\geq 1$,  $ s>D/2$  and $C>0$ are constants.
 Now by Lemma~\ref{le:H1toL1Key} we conclude that assertion i) holds. 
 
Assertion ii) follows by interpolation between $H_A^p(M)$.

Assertion iii)  follows from ii) by identifying $H_A^p(M)$ and $L^p$ (cf. Proposition~\ref{prop:HpCoinsidesLp}) since the estimate~$(DG_{p})$ was proved in Theorem 3.4 in~\cite{AO}.
\end{proof}
Note that assertion iii) of the previous theorem was already proved in \cite{AO}.

For $p>2$, we give a consequence of Theorem \ref{th:LPp>2} and \cite[Theorem 3.9]{AO}.

\begin{theorem}\label{th:Schrodinger}
Assume that  the Riemannian manifold $M$ satisfies the doubling
condition~\eqref{def:doubling2} and the heat kernel of the Laplacian
satisfies the Gaussian upper bound~\eqref{def:Gassianbound}. Assume also
 that the negative part of the Ricci curvature $R^-$
satisfies~\eqref{eq:vol} for some $p_2 > 3$.   Let $A$ be the
Schr\"odinger operator with signed potential $V$ which satisfies ~\eqref{Vvol} and
~\eqref{eq:conditionV-} for some $\alpha \in [0, 1)$.  Then
$dA^{-\frac{1}{2}}$ is bounded on $L^p$ for $p_0'<p<\frac{p_0
r}{p_0+r}$ where $r=\inf(p_1,p_2)$.
\end{theorem}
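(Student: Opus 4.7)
The plan is to deduce the theorem by combining two ingredients: Theorem \ref{th:LPp>2} applied to the Laplacian, and the transfer principle from \cite[Theorem 3.9]{AO}, which controls how the Riesz transform of a Schr\"odinger operator compares to that of the Laplacian when the potential satisfies a condition of the type \eqref{Vvol}.

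For the lower part of the range, namely $p_0' < p \le 2$, no new work is needed: the $\alpha$-subcriticality of $V^-$ already ensures, via Theorem \ref{th:Schrodinger0}(iii), that $dA^{-1/2}$ is bounded on $L^p$ for all $p \in (p_0',2]$. The substantive task is therefore the range $2 < p < \frac{p_0 r}{p_0 + r}$. Here the starting point is that Theorem \ref{th:LPp>2}, applied under the sole hypothesis \eqref{eq:vol} on $R^-$, gives the $L^p$-boundedness of $d\Delta^{-1/2}$ for every $1<p<p_2$. One then invokes \cite[Theorem 3.9]{AO} to transfer this bound from $\Delta$ to $A = \Delta + V$. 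The mechanism can be described as follows: the subordination formula $A^{-1/2} = c \int_0^\infty (I + tA)^{-1} t^{-1/2}\,dt$ combined with the resolvent identity yields
$$
dA^{-1/2} - d\Delta^{-1/2} = -c \int_0^\infty \sqrt{t}\, d(I + tA)^{-1}\, V\, (I + t\Delta)^{-1}\, \frac{dt}{t}.
$$
Splitting $V = |V|^{1/2}\,\mathrm{sgn}(V)\,|V|^{1/2}$ and applying H\"older's inequality with an intermediate exponent $q$ determined by $\frac{1}{q} = \frac{1}{p} + \frac{1}{r}$, where $r = \inf(p_1,p_2)$, reduces the $L^p \to L^p$ boundedness of the integral to the integrability of $\||V|^{1/2} e^{-sL}\|_{p\to p}$ against $ds/\sqrt{s}$, for $L = \Delta$ and $L = A$. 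This integrability is precisely what \eqref{Vvol} encodes, once one invokes the Gaussian upper bound for $e^{-s\Delta}$ and the analogous Gaussian-type bound for $e^{-sA}$ (available thanks to the $\alpha$-subcriticality of $V^-$ together with the domination principle). The constraint that the intermediate exponent $q$ stays below $p_0$, needed to apply the $L^q \to L^p$ off-diagonal bounds for the $A$-resolvent furnished by Theorem \ref{JoV}, forces exactly the upper endpoint $p < \frac{p_0 r}{p_0 + r}$.

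The main obstacle is the simultaneous bookkeeping of three coupled exponent ranges: the range $1 < p < p_2$ where $d\Delta^{-1/2}$ is known to be bounded, the range $(p_0', p_0)$ where the $A$-semigroup off-diagonal estimates of Theorem \ref{JoV} hold, and the H\"older constraint attached to the $V$-integrability parameter $r$. The combinatorial matching of these three ranges produces precisely the interval $(p_0', \frac{p_0 r}{p_0 + r})$, and the bulk of the technical estimates needed to justify the integral manipulations above is contained in \cite[Theorem 3.9]{AO}; Theorem \ref{th:LPp>2} is used only to upgrade the input Riesz bound for the Laplacian from its classical range to $(1,p_2)$, which in turn enlarges the allowed output range for $dA^{-1/2}$.
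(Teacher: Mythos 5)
Your high-level plan is the right one, and it matches the paper's: the theorem is obtained by combining Theorem~\ref{th:LPp>2} (which gives $L^p$-boundedness of $d\Delta^{-1/2}$ for $1<p<p_2$ under hypothesis \eqref{eq:vol}) with \cite[Theorem 3.9]{AO}. But the way the paper actually exploits \cite[Theorem 3.9]{AO} is simpler and cleaner than your reconstruction: that reference proves $L^p$-boundedness of $\Delta^{1/2}A^{-1/2}$ for $p_0'<p<\frac{p_0 r}{p_0+r}$, with no Ricci hypothesis, and the paper then writes $dA^{-1/2}=(d\Delta^{-1/2})\circ(\Delta^{1/2}A^{-1/2})$ and composes. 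Since $\frac{p_0 r}{p_0+r}\le r\le p_2$, the intersection of the two ranges is exactly $(p_0',\frac{p_0 r}{p_0+r})$, and the proof ends there; no separate treatment of $p\le 2$ is needed.

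Your attempted re-derivation of the mechanism inside \cite[Theorem 3.9]{AO} via the difference $dA^{-1/2}-d\Delta^{-1/2}$ has two concrete problems. First, as written the integrand carries $d(I+tA)^{-1}$, and any attempt to turn this into useful $L^p$ information (say by factoring $d(I+tA)^{-1}=dA^{-1/2}A^{1/2}(I+tA)^{-1}$) presupposes the very boundedness of $dA^{-1/2}$ that you are trying to prove; to avoid circularity you would need to use the other side of the resolvent identity, $(I+tA)^{-1}-(I+t\Delta)^{-1}=-t(I+t\Delta)^{-1}V(I+tA)^{-1}$, so that the differential $d$ lands on the $\Delta$-resolvent, which is controllable via Theorem~\ref{th:LPp>2}. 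Second, you invoke Theorem~\ref{JoV} to supply $L^q\to L^p$ off-diagonal bounds for the $A$-resolvent, but Theorem~\ref{JoV} concerns operators of the form $\nabla^*\nabla+\mathcal{R}^+-\mathcal{R}^-$ on $1$-forms (the Hodge-Laplacian with a potential), not the scalar Schr\"odinger operator $A=\Delta+V$ on functions; the relevant $L^p$-$L^q$ estimates for $e^{-tA}$ come instead from the Gaussian bound together with the $\alpha$-subcriticality of $V^-$, and are already built into \cite[Theorem 3.9]{AO}. Neither issue affects your overall strategy — citing the two theorems and combining them — but the composition $dA^{-1/2}=(d\Delta^{-1/2})(\Delta^{1/2}A^{-1/2})$ is the clean, non-circular way to carry it out.
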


This result is a combination of  Theorem \ref{th:LPp>2} and
\cite[Theorem 3.9]{AO}. Indeed it was proved in \cite[Theorem
3.9]{AO} that $\Delta^{\frac{1}{2}} A^{-\frac{1}{2}}$  is bounded on
$L^p$ for $p_0'<p<\frac{p_0 r}{p_0+r}$ where $r=\inf(p_1,p_2)$
without assumptions on the Ricci curvature. By Theorem
\ref{th:LPp>2}, $d\Delta^{-\frac{1}{2}}$ is bounded on $L^p$. The
boundedness of $dA^{-\frac{1}{2}}$ follows by composition.

\begin{remark}
Suppose that  $V^-=0$ (or equivalently $\alpha=0$) and $v(x,r)\ge C r^D$.
Then $p_0 = \infty$ and  the assumptions in Theorem \ref{th:Schrodinger} hold if  $R^-\in L^{\frac{D}{2}-\eta}\cap L^{\frac{D}{2}+\eta}$
and $V\in L^{\frac{D}{2}-\eta}\cap L^{\frac{D}{2}+\eta}$ for some $\eta > 0$. Thus,   the theorem gives  that  $dA^{-\frac{1}{2}}$
 is bounded on $L^p$ for all $p \in (1, D)$. Note that one cannot  expect a better interval  for boundedness of the Riesz transform of Schr\"dinger operators as we will show in the next section.
\end{remark}

\section{A negative result for the Riesz transform of Schr\"odinger operators}
\label{sec:negativeresult} \setcounter{equation}{0}

In this section, we show a negative result for the boundedness of the Riesz transform for Schr\"odinger operators. We prove even more : on a wide class of  Riemannian manifolds, the Riesz transform $d(ñ\Delta + V)^{-\frac12}$ is never bounded on $L^p$ for any $p > D$, unless eventually $V= 0$.

A result in this direction was given by Guillarmou and Hassel \cite{GH} on complete noncompact and asymptotically conic manifolds of dimension $n$. They assumed $V$ is non zero, smooth and sufficiently vanishing at infinity. They proved the Riesz transform $d(ñ\Delta + V)^{-\frac12}$ is not bounded on $L^p$ for $p>n$ if there exists a $L^2$ function $\psi$ such that $(-\Delta+V)\psi=0$.

For our concern, we recall that a Riemannian manifold  $M$ satisfies the $L^2$ Poincar\'e inequality if there exists a constant $C>0$ such that for every $f\in W^{1,2}_{loc}(M)$ and every ball  $B=B(x,r)$
\begin{equation}\label{P}
\left(\int_B|f-f_B|^2d\mu\right)^{\frac{1}{2}}\le Cr\left(\int_B|d f|^2d\mu\right)^{\frac{1}{2}},
\end{equation}
 where $\displaystyle{f_B=\frac{1}{\mu(B)}\int_B f d\mu}$.

The main result of this section is the following theorem in which we consider for simplicity only non-negative potentials.

\begin{theorem}\label{th6.1}
Assume that $M$ satisfies the doubling volume condition~\eqref{def:doubling2} and the Poincar\'e inequality $(\ref{P})$. Let $0\le V\in L^1_{loc}(M)$ and consider the Schr\"odinger operator $A=\Delta+V$. We suppose that there exists a positive function $\phi$ bounded on $M$ such that $e^{-tA} \phi = \phi$.
If $\|d e^{-tA}\|_{p-p}\le\frac{C}{\sqrt{t}}$ for some  $p> \max(D, 2)$, then $V=0$. In particular, if $dA^{-\frac12}$ is bounded
on $L^p$ for some $p > \max(D, 2)$, then $V = 0$.
\end{theorem}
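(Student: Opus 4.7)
The plan is to derive a pointwise gradient decay on $\phi$ forcing $d\phi\equiv 0$, and then use $A\phi=0$ to conclude $V=0$. Since $V\ge 0$, domination against $e^{-t\Delta}$ together with \eqref{def:Gassianbound} yields a Gaussian upper bound on the heat kernel $k_t^A$ of $A$. The key step is to establish the pointwise Gaussian gradient bound
\[
|d_x k_t^A(x,y)|\le\frac{C}{\sqrt t\,v(x,\sqrt t)}\exp\!\left(-c\rho(x,y)^2/t\right),
\]
starting from the hypothesis $\|de^{-tA}\|_{p\to p}\le C/\sqrt t$. Writing $k_t^A(\cdot,y)=e^{-tA/2}\bigl[k_{t/2}^A(\cdot,y)\bigr]$ and noting that the input lies in $L^p$ with size controlled by the Gaussian bound on $k_{t/2}^A$, the $L^p$ gradient hypothesis gives an $L^p$ bound on $d_x k_t^A(\cdot,y)$; under doubling and the Poincar\'e inequality \eqref{P} the Morrey embedding $W^{1,p}_{\rm loc}\hookrightarrow L^\infty_{\rm loc}$, which is available precisely because $p>D$, combined with local parabolic estimates exploiting that $d_x k_t^A$ itself satisfies a heat-type equation and with Davies--Gaffney off-diagonal decay for $A$, upgrades this $L^p$ bound to the claimed pointwise Gaussian estimate.

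Second, the identity $e^{-tA}\phi=\phi$ together with the regularizing property of $e^{-tA}$ shows $\phi\in C^\infty(M)$, and differentiating
\[
\phi(x)=\int_M k_t^A(x,y)\phi(y)\,d\mu(y)
\]
in $x$ gives $d\phi(x)=\int_M d_x k_t^A(x,y)\phi(y)\,d\mu(y)$. Plugging in the gradient bound from the first step and using $\phi\in L^\infty$ together with the doubling property to bound the Gaussian integral by a constant independent of $x$ and $t$, I obtain
\[
|d\phi(x)|\le\frac{C\|\phi\|_\infty}{\sqrt t}\qquad\text{for all } x\in M,\ t>0.
\]
Letting $t\to\infty$ forces $d\phi\equiv 0$, so $\phi$ is a positive constant. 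Combined with $A\phi=\Delta\phi+V\phi=V\phi=0$ and $\phi>0$, this yields $V\equiv 0$. The final assertion of the theorem is then immediate: boundedness of $dA^{-1/2}$ on $L^p$ gives $\|de^{-tA}\|_{p\to p}=\|dA^{-1/2}\cdot A^{1/2}e^{-tA}\|_{p\to p}\le C/\sqrt t$ via the holomorphic functional calculus for $A$ (which has a bounded $H^\infty$-calculus on $L^p$ thanks to the Gaussian bound), so we reduce to the case already treated.

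The main obstacle is clearly the first step: the passage from an $L^p$ operator bound on $de^{-tA}$ to a genuine pointwise Gaussian gradient bound on the kernel. This is where the assumption $p>D$ is essential, since it is precisely the threshold for Morrey embedding, and where the Poincar\'e inequality \eqref{P} plays its role through the Saloff-Coste--Grigor'yan Li--Yau machinery. Once this pointwise gradient kernel bound is secured the rest of the argument is short and essentially formal.
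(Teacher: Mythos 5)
Your overall plan — use the $L^p$ gradient bound to force $\phi$ to be constant, then conclude $V\phi=0$ hence $V=0$ — is exactly what the paper does, and your closing steps (the functional-calculus reduction from $dA^{-1/2}$ to $\|de^{-tA}\|_{p\to p}\le C/\sqrt t$, and $V\phi=0\Rightarrow V=0$) are fine. The genuine gap is in your first step.

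You claim to upgrade $\|de^{-tA}\|_{p\to p}\le C/\sqrt t$ to a \emph{pointwise} Gaussian bound on $|d_xk_t^A(x,y)|$. This cannot be extracted from the stated hypotheses. The mechanism you invoke does not produce it: from $k_t^A(\cdot,y)=e^{-\frac{t}{2}A}k_{t/2}^A(\cdot,y)$ and the Gaussian bound you indeed get $k_t^A(\cdot,y)\in W^{1,p}$ with $\|d_xk_t^A(\cdot,y)\|_p\lesssim t^{-1/2}v(y,\sqrt t)^{-(1-1/p)}$, but Morrey/Sobolev embedding $W^{1,p}\hookrightarrow C^{0,1-D/p}$ (for $p>D$) then gives a \emph{H\"older} bound on the \emph{function} $k_t^A(\cdot,y)$, i.e.\ a bound on $|k_t^A(x,y)-k_t^A(x',y)|$ — not an $L^\infty$ bound on the \emph{gradient} $d_xk_t^A$. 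To bound the gradient pointwise via Morrey you would need the Hessian in $L^p$, which no hypothesis supplies; and independently of that, a global Gaussian off-diagonal decay for $d_xk_t^A$ is a much stronger statement than an $L^p$ operator bound for a single finite $p$ (it would essentially amount to an $L^\infty\to L^\infty$ bound on $de^{-tA}$, which is not a consequence of doubling, Poincar\'e, and Riesz boundedness on a single $L^p$). Your appeal to ``local parabolic estimates'' and ``Davies--Gaffney'' is where the argument silently asserts what has to be proved.

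The paper avoids this by \emph{staying at the level of finite differences}: it proves a Morrey-type lemma (Lemma~\ref{le:Hajlasz}) giving $|f(x)-f(x')|\le C_{x,x',p}\|df\|_p$ for $\rho(x,x')\le 1$, applies it to $f=e^{-tA}\!\left(v(\cdot,\sqrt t)^{1-1/p}g\right)$ to obtain the uniform (non-Gaussian) bound $|k_t^A(x,y)-k_t^A(x',y)|\le C t^{-1/2}v(y,\sqrt t)^{-(1-1/p)}$, and then interpolates with the Gaussian bound on $k_t^A$ itself (taking a square root) to make the difference integrable in $y$ with a rate $t^{-\frac14(1-D/p)}\to0$. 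This yields $|\phi(x)-\phi(x')|\to 0$, hence $\phi$ constant — without ever differentiating the kernel or needing a gradient kernel bound. If you correct your Morrey step to what it actually delivers (a H\"older bound on $k_t^A(\cdot,y)$), you land precisely on the paper's argument; as written, the central estimate of your proposal is unsupported.
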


\begin{remark} The assumption $e^{-tA} \phi = \phi$ for all $t\ge 0$ with $\phi$ positive bounded was studied by several authors. We give here  some references. In the Euclidean setting $M= \mathbb{R}^n$, Simon \cite{Simon} proved that if the potential $V$ is in $L^{\frac{n}{2}-\eta}\cap L^{\frac{n}{2}+\eta}$ for a certain $\eta>0$, then the assumption $e^{-tA}\phi=\phi$ for all $t\ge 0$ is equivalent to the fact that $V^-$ satisfies ($\ref{eq:conditionV-}$).  With  different methods,  Grigor'yan \cite{G} and Takeda \cite{T} proved that if $M$ is non-parabolic and satisfies Li-Yau estimates and if the potential $V$ is nonnegative and Green-bounded on $M$, then such a function $\phi$ exists.
\end{remark}

\begin{proof}[Proof of Theorem \ref{th6.1}]
 From Lemma \ref{le:Hajlasz} below, we have for all $f\in W^{1,p}(M)$ and for almost every $x,x' \in M$ with $\rho(x,x')\le 1$
\begin{equation}\label{HK}
|f(x)-f(x')|\le C_{x,p}\|d f\|_p.
\end{equation}

 Let $f\in C_c^\infty(M)$ and $x,x'\in M$ with $\rho(x,x')\le 1$ and fix $p > \max(D,2)$ such that
 $\|d e^{-tA}\|_{p-p}\le\frac{C}{\sqrt{t}}$ for all $t > 0$.
 From \eqref{HK}, we have for all $t > 0$
\begin{align}\label{666}
|e^{-tA}(v(.,\sqrt{t})^{1-\frac{1}{p}}f)(x)- e^{-tA}(v(.,\sqrt{t})^{1-\frac{1}{p}}f)(x')|
&\le C\|d e^{-tA}v(.,\sqrt{t})^{1-\frac{1}{p}}f\|_p\nonumber\\
&\le \frac{C}{\sqrt{t}}\|e^{-\frac{t}{2}A}v(.,\sqrt{t})^{1-\frac{1}{p}}f\|_p.
\end{align}
Since~\eqref{def:doubling2} and $(\ref{P})$ are equivalent to Li-Yau estimates (see \cite{SC}), the heat kernel $p_t(x,y)$ of
 $\Delta$ satisfies the Gaussian upper bound~\eqref{def:Gassianbound}. Since $V \ge 0$, the heat kernel $k_t(x,y)$ of $A$ satisfies also the same Gaussian upper bound.
As a consequence, the semigroup $e^{-tA}$ is uniformly bounded on $L^1(M)$ and the operator $e^{-tL}v(.,\sqrt{t})$ is uniformly bounded from $L^1(M)$ to $L^{\infty}(M)$. An interpolation argument shows that for all $p\in[1,\infty]$ the operator $e^{-tL}v(.,\sqrt{t})^{1-\frac{1}{p}}$ is bounded from $L^1(M)$ to $L^{p}(M)$ (see e.g.  \cite[Proposition 2.1.5]{BCS}). \\
It follows from this and \eqref{666} that
\begin{equation}
|e^{-tA}(v(.,\sqrt{t})^{1-\frac{1}{p}}f)(x)- e^{-tA}(v(.,\sqrt{t})^{1-\frac{1}{p}}f)(x')|
\le \frac{C}{\sqrt{t}}\|f\|_1.
\end{equation}
This extends by density to all $f \in L^1(M)$ and gives
\begin{equation}
\left|\int_M(k_t(x,y)-k_t(x',y))v(y,\sqrt{t})^{1-\frac{1}{p}}f(y)d\mu(y)\right|\le \frac{C}{\sqrt{t}}\|f\|_1.
\end{equation}
 Since the previous inequality is satisfied for all $f\in L^1(M)$, we obtain for a.e. $x, y\in M$ and all $t > 0$
\begin{equation}
|k_t(x,y)-k_t(x',y)|\le \frac{C}{\sqrt{t}\,v(y,\sqrt{t})^{1-\frac{1}{p}}}.
\end{equation}
Using~\eqref{def:doubling2} and~\eqref{def:Gassianbound} for $k_t(x,y)$ we find
\begin{align*}
&|k_t(x,y)-k_t(x',y)|\\
&\le|k_t(x,y)-k_t(x',y)|^{\frac{1}{2}} \left[k_t(x,y)+k_t(x',y)\right]^{\frac{1}{2}}\\
&\le \frac{C}{t^{\frac{1}{4}}v(y,\sqrt{t})^{\frac{1}{2}-\frac{1}{2p}}}\left[\frac{C}{v(x,\sqrt{t})^\frac{1}{2}}exp(-c\frac{\rho^2(x,y)}{t})+\frac{C}{v(x',\sqrt{t})^\frac{1}{2}}exp(-c\frac{\rho^2(x',y)}{t})\right]\\
&\le\frac{C}{t^{\frac{1}{4}}v(y,\sqrt{t})^{1-\frac{1}{2p}}}\left[exp(-c\frac{\rho^2(x,y)}{t})+exp(-c\frac{\rho^2(x',y)}{t})\right].
\end{align*}
Therefore,
\begin{equation}
\int_M|k_t(x,y)-k_t(x',y)|d\mu(y)\le \frac{C}{t^{\frac{1}{4}}}\left[ v(x,\sqrt{t})^{\frac{1}{2p}}+ v(x',\sqrt{t})^{\frac{1}{2p}}\right].
\end{equation}
From~\eqref{def:doubling2} and since $\rho(x,x')\le 1$, we deduce that for $t\ge 1$
\begin{equation*}
\int_M|k_t(x,y)-k_t(x',y)|d\mu(y)
\le \frac{Cv(x,\sqrt{t})^{\frac{1}{2p}}}{t^{\frac{1}{4}}}\le \frac{C v(x,1)^{\frac{1}{2p}} }{t^{\frac{1}{4}(1-\frac{D}{p})}}.
\end{equation*}
Furthermore for all $t\ge 1$
\begin{align*}
|\phi(x)-\phi(x')|
&=|e^{-tA}\phi(x)-e^{-tA}\phi(x')|\\
&=\left|\int_M(k_t(x,y)-k_t(x',y))\phi(y)d\mu(y)\right|\\
&\le \frac{C v(x,1)^{\frac{1}{2p}} }{t^{\frac{1}{4}(1-\frac{D}{p})}}\|\phi\|_{\infty}.
\end{align*}
We let $t \to +\infty$ and since $p > D$ it follows that  $\phi$ is constant on $M$. From the assumption
$e^{-tA}\phi = \phi$ it follows that $A\phi =0$. The latter equality gives  $V\phi=0$ and finally  $V=0$ since $\phi$ is positive.

Finally, since the  semigroup $e^{-tA}$ is analytic on $L^p$,  if the  Riesz transform $dA^{-\frac{1}{2}}$ is bounded on $L^p$ then  $$\|de^{-tA}\|_{p-p} = \|d A^{-\frac{1}{2}} A^{\frac{1}{2}} e^{-tA}\|_{p-p}
 \le C  \| A^{\frac{1}{2}} e^{-tA}\|_{p-p} \le   \frac{C'}{\sqrt{t}}.$$
 The previous arguments show that $V = 0$.
\end{proof}

To complete the proof of the theorem, it remains to prove the following lemma.

\begin{lemma}\label{le:Hajlasz}
Let $p\ge 2$ and $p>D$. Assume that~\eqref{def:doubling2} and $(\ref{P})$ are satisfied. For all $f\in W^{1,p}(M)$ and for almost every $x,x'\in M$ with $\rho(x,x')\le 1$ there exists a constant $C=C_{x,x',p}$ such that
\begin{equation*}
|f(x)-f(x')|\le C\|d f\|_p.
\end{equation*}
\end{lemma}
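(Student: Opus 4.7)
The plan is to establish a Morrey-type embedding by a standard telescoping argument on dyadic balls centered at $x$ and $x'$, exploiting the hypothesis $p>D$ to ensure geometric convergence. Fix $x,x'\in M$ with $\rho(x,x')\le 1$, and assume $x,x'$ are Lebesgue points of $f$ (which is the generic case). Set $B_k = B(x,2^{-k})$ for $k\ge 0$, and write
\[
f(x) - f_{B_0} \;=\; \sum_{k=0}^\infty (f_{B_{k+1}} - f_{B_k}),
\]
with the limit justified by the Lebesgue differentiation theorem, valid under the doubling condition~\eqref{def:doubling2}.

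Next, I would estimate each term of the telescoping series. Using $B_{k+1}\subset B_k$ and doubling (which yields $\mu(B_k)\le C\mu(B_{k+1})$), followed by Cauchy--Schwarz and the $L^2$ Poincar\'e inequality~\eqref{P}, one obtains
\[
|f_{B_{k+1}} - f_{B_k}| \;\le\; \frac{C}{\mu(B_k)}\int_{B_k}|f-f_{B_k}|\,d\mu
\;\le\; C\,2^{-k}\Bigl(\frac{1}{\mu(B_k)}\int_{B_k}|df|^2\,d\mu\Bigr)^{1/2}.
\]
Since $p\ge 2$, H\"older's inequality gives $\bigl(\int_{B_k}|df|^2\,d\mu\bigr)^{1/2}\le \mu(B_k)^{1/2-1/p}\|df\|_p$, so
\[
|f_{B_{k+1}} - f_{B_k}| \;\le\; C\,2^{-k}\,\mu(B_k)^{-1/p}\,\|df\|_p.
\]
The reverse doubling consequence $\mu(B_k) = v(x,2^{-k})\ge c\,v(x,1)\,2^{-kD}$ (obtained by applying~\eqref{def:doubling2} with $\lambda = 2^k$) then yields $\mu(B_k)^{-1/p}\le C_{x}\,2^{kD/p}$, so that
\[
|f_{B_{k+1}} - f_{B_k}| \;\le\; C_{x,p}\,2^{-k(1-D/p)}\,\|df\|_p.
\]
Summing the geometric series (convergent exactly because $p>D$) gives $|f(x)-f_{B(x,1)}|\le C_{x,p}\|df\|_p$, and the analogous estimate holds for $x'$.

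It remains to compare $f_{B(x,1)}$ with $f_{B(x',1)}$. Since $\rho(x,x')\le 1$, both balls lie inside $B(x,2)$, and doubling implies $\mu(B(x,2))\le C\min(\mu(B(x,1)),\mu(B(x',1)))$ with constants depending only on $x$ (and $x'$). Applying the same Cauchy--Schwarz / Poincar\'e / H\"older sequence on $B(x,2)$ gives
\[
|f_{B(x,1)} - f_{B(x,2)}| + |f_{B(x',1)} - f_{B(x,2)}| \;\le\; C_{x,x',p}\,\|df\|_p.
\]
Combining the three estimates yields $|f(x)-f(x')|\le C_{x,x',p}\|df\|_p$, as required.

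I do not expect a genuine obstacle here: the argument is the classical Morrey embedding adapted to a doubling metric measure space with an $L^2$ Poincar\'e inequality. The only point requiring slight care is that the resulting constant $C_{x,x',p}$ depends on the base volumes $v(x,1)$ and $v(x',1)$, but this is precisely the local (non-uniform) form of the estimate stated in the lemma.
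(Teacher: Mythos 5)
Your proof is correct and follows essentially the same Morrey-type telescoping argument as the paper (dyadic chain of balls, Lebesgue differentiation, doubling, $L^2$ Poincar\'e, H\"older to $L^p$, geometric convergence because $p>D$). The only cosmetic difference is that the paper starts the telescoping chain at radius $\rho(x,x')$ rather than at radius $1$, which yields a constant that scales like $\rho(x,x')^{1-D/p}$; both variants prove the lemma as stated.
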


\begin{proof}
 The arguments in this  proof are  taken from \cite[p. 13-14]{HK}. We repeat them for the reader's convenience.
  Write $B_i(x)=B(x,r_i)=B(x,\frac{\rho(x,x')}{2^i})$ for each nonnegative integer $i$ and $f_B=\frac{1}{\mu(B)}\int_B f d\mu$. By the Lebesgue differentiation theorem we have for almost every $x\in M$, $f_{B_i(x)}\rightarrow f(x)$ as $i$ tends to infinity.
  Using ~\eqref{def:doubling2}, $(\ref{P})$ and  H\"older's  inequality  we obtain
\begin{align*}
|f(x)-f_{B_0(x)}|
&\le \sum_{i=0}^{\infty}|f_{B_{i+1}(x)}-f_{B_i(x)}|\\
&\le \sum_{i=0}^{\infty}\frac{1}{\mu(B_{i+1}(x))}\int_{B_{i+1}(x)}|f-f_{B_i(x)}|d\mu\\
&\le C \sum_{i=0}^{\infty}\frac{1}{\mu(B_{i}(x))}\int_{B_{i}(x)}|f-f_{B_i(x)}|d\mu\\
&\le C \sum_{i=0}^{\infty}\left(\frac{1}{\mu(B_{i}(x))}\int_{B_{i}(x)}|f-f_{B_i(x)}|^2d\mu\right)^{\frac{1}{2}}\\
&\le C \sum_{i=0}^{\infty}\frac{\rho(x,x')}{2^i}\left(\frac{1}{\mu(B_{i}(x))}\int_{B_{i}(x)}|d f|^2d\mu\right)^{\frac{1}{2}}\\
&\le C \sum_{i=0}^{\infty}\frac{1}{2^i\mu(B_{i}(x))^{\frac{1}{p}}}\|d f\|_p.
\end{align*}
Using property~\eqref{def:doubling2} and  the fact that $\rho(x,x')\le 1$,  yields
\begin{equation*}
\frac{1}{\mu(B_i(x))^{\frac{1}{p}}} \le\frac{C 2^{i\frac{D}{p}}}{ \rho(x,x')^{\frac{D}{p}}v(x,1)^{\frac{1}{p}}}.
\end{equation*}
Therefore, for $p>D$, we obtain
\begin{equation}\label{equa1}
|f(x)-f_{B_0(x)}|\le C_{x,x',p} \|d f\|_p.
\end{equation}
Similarly
\begin{equation}\label{equa2}
|f(x')-f_{B_0(x')}|\le C_{x,x',p} \|d f\|_p.
\end{equation}
Furthermore from the triangle inequality and~\eqref{def:doubling2} we have
\begin{align*}
|f_{B_0(x)}-f_{B_0(x')}|
&\le |f_{B_0(x)}-f_{2B_0(x)}|+|f_{B_0(x')}-f_{2B_0(x)}|\\
&\le \frac{C}{\mu(2B_0(x))}\int_{2B_0(x)}|f-f_{2B_0(x)}|d\mu.
\end{align*}
We use the  same arguments as above and  obtain
\begin{equation}\label{equa3}
|f_{B_0(x)}-f_{B_0(x')}|\le C_{x,x',p} \|d f\|_p.
\end{equation}
The lemma follows combining $(\ref{equa1})$, $(\ref{equa2})$ and $(\ref{equa3})$.
\end{proof}

\end{document}